\newtheorem{thm}{Theorem}[section]
\newtheorem{cor}[thm]{Corollary}
\newtheorem{prop}[thm]{Proposition}
\theoremstyle{definition}
\newcommand{\F}{\mathbb F}
\newcommand{\K}{\mathbb K}
\newcommand{\cD}{\mathcal D}
\newcommand{\cS}{\mathcal S}
\newcommand{\cC}{\mathcal C}
\newcommand{\Tr}{\mathrm{Tr}}
\newcommand{\Aut}{\mathrm{Aut}}
\newcommand{\End}{\mathrm{End}}
\def\F{\mathbb F}
\def\cA{\mathcal A}
\def\cT{\mathcal T}
\def\V{\mathbb V}
\def\cG{\mathcal G}
\def\cU{\mathcal U }
\def \cH {\mathcal H}
\def\zhou#1 {\fbox {\footnote {\ }}\ \footnotetext { From Yue: {\color{red}#1}}}
\def\trombetti#1 {\fbox {\footnote {\ }}\ \footnotetext { From Rocco: {\color{blue}#1}}}
\begin{document}

\title{Automorphism groups and new constructions of maximum additive rank metric codes with restrictions}

\author{G. Longobardi, G. Lunardon, R. Trombetti, Y. Zhou}

\begin{abstract}
Let   $d, n \in \mathbb{Z}^+$ such that  $1\leq d \leq n$. A {\it $d$-code} $\cC \subset \F_q^{n \times n}$ is a subset of order $n$ square matrices with the property that for all pairs of distinct elements in $\cC$, the rank of their difference  is greater than or equal to $d$. A $d$-code with as many as possible elements is called a {\it maximum} $d$-code. The integer $d$ is also called the {\it minimum distance} of the code. When $d<n$, a classical example of such an object is the so-called {\it generalized Gabidulin code}, \cite{kshevetskiy_new_2005}.
In \cite{delsarte_alternating_1975}, \cite{schmidt2} and \cite{schmidt_hermitian}, several classes  of maximum $d$-codes made up respectively of symmetric, alternating and hermitian matrices were  exhibited.  In this article we focus on such examples. 

\noindent Precisely, we  determine their automorphism groups and solve the equivalence issue for them.  Finally,  we exhibit a maximum symmetric $2$-code which is not equivalent to the one with same parameters constructed in   \cite{schmidt2}.
\end{abstract}

\maketitle

\section{Introduction}\label{sec:intro}

Let $\F_q$ be the finite field with $q$ elements and denote by $\F_q^{n \times n}$ the set of order $n$ matrices with entries in $\F_q$. It is easy to verify that the map $d$ defined by 
$$d(A,B) = \mathrm{rank} (A-B),$$ 
for $A,B \in \F_q^{n\times n}$, is a distance function on $\F_q^{n \times n}$, which is often called the {\it rank distance} or the {\it rank metric} on $\F_q^{n \times n}$. 

Given any integer $1\leq d \leq n$, we consider here subsets $\cC \subset \F_q^{n \times n}$ with the property that, for all distinct matrices  $M_1$ and $M_2 \in \cC,$ the rank of $M_1-M_2$ is greater than or equal to $d$. These sets are usually called {\it rank metric codes} with minimum distance $d$, and in some context also $d$-codes. Also, we say that a $d$-code $\cC \subset \F_q^{n \times n}$ is {\it additive} if $\cC$ is a subgroup of $(\F_q^{n \times n},+)$. An {\it $\F_q$-linear} $d$-code is a subspace of  $\F_q^{n \times n}$ viewed as an $n^2$-dimensional vector space over $\F_q$.

For the applications in classical coding theory, given $n$ and $d$, it is desirable to have $d$-codes which are maximum in size. In the general case, which means if it is not required that all elements in the set must possess specific restrictions, Delsarte proved that this bound is $q^{n(n-d+1)}$ (the so-called {\it Singleton-like bound} for rank distance codes) \cite{delsarte_bilinear_1978}. If the cardinality of the code $\cC$ meets this bound, we say that $\cC$ is a {\it Maximum Rank Distance code}, ({\it MRD-code}, for short), or {\it maximum d-codes}. 

Let $\F_{q^n}$ be a finite field of order $q^n$, $q$ a prime power. Let $$\mathcal{L}_{q}[x]=\left\{f(x)=\sum_{i=0}^{k} c_i x^{q^i}: c_i \in \F_{q^n},\, k \in \mathbb{Z}^+ \right\},$$i.e., the set of so-called {\it linearized polynomials} over $\F_{q^n}$ (or {\it $q$-polynomials}). If $k$ is the largest integer such that $c_k \neq 0$, we say that $k$ is the $q$-degree of $f$. 

Rank metric codes consisting of order $n$ square matrices can be considered also in $q$-polynomial representation. 

Indeed it is well known that $\mathcal{L}_{(n,q)}[x]=\mathcal{L}_{q}[x]/(x^{q^n}-x)$ is equivalent to $\mathrm{End}_{\F_q}(\F_{q^n})$, i.e., the set of all endomorphisms of $\F_{q^n}$ seen as a vector space over $\F_q$. Hence, the algebraic structure $(\mathcal{L}_{(n,q)}[x],+,\circ,\cdot)$, where $+$ is addition of maps, $\circ$ is the composition of maps (mod $x^{q^n}-x$)  and $\cdot$ is the scalar multiplication by elements of $\F_q$, is isomorphic to the algebra  $\F_q^{n \times n}$.

Let $\mathrm{Tr}_{q^n / q}$ be the trace function of $\F_{q^n}$ over $\F_q$. The map
\begin{equation}\label{form:bilinearform}
T: (x,y) \in \F_{q^n} \times \F_{q^n}  \rightarrow \mathrm{Tr}_{q^n / q}(xy) \in \F_q,
\end{equation}
 is a non-degenerate $\F_q$-bilinear form of $\F_{q^n}$. 

Let $f(x)=\sum^{n-1}_{i=0} a_i x^{q^{i}}$ be an $\F_q$-linear map of $\F_{q^n}$. Using the terminology of \cite{Sheekey}, we denote by  $f^{\top}$ the {\it adjoint} map of $f$ with respect to $T$; i.e., $$ f^{\top}(x)=\sum_{i=0}^{n-1} a_{n-i}^{q^{i}}x^{q^{i}}.$$ If $f=f^{\top}$, we say that $f$ is \textit{self-adjoint} with respect to the bilinear form defined in (\ref{form:bilinearform}). If $\cC$ is a code consisting of $q$-polynomials, then the adjoint code of $\cC$ is $\cC^{\top}=\{f^{\top} \,:\,  f \in \cC\}$. In fact, the adjoint of $f$ is equivalent to the {\it transpose} of the matrix in $\F_q^{n \times n}$ derived from $f$. 

In the literature, codes in the rank metric context are studied up to several definitions  of equivalence relation; see \cite{de_la_cruz_algebraic_2016,morrison_equivalence_2013}. For what is needed here we may say that two sets of $q$-polynomials over $\F_{q^n}$, say $\cC$ and $\cC'$, are equivalent if there exist two permutation $q$-polynomials $g_1$, $g_2$ and $\rho \in \Aut(\F_q)$ such that  
\begin{equation}\label{eq:generalequivalence} 
	\cC' =\{ g_1\circ f^\rho \circ g_2(x) +h(x) : f\in \cC \},
\end{equation} 
where $h(x) \in\mathcal{L}_{(n,q)}[x]$, and $(\sum a_{i}x^{q^i})^\rho:= \sum a_{i}^\rho x^{q^i}$. Although, in general {\it isometric equivalence} covers the possibility when $$ \cC' =\{ g_1\circ f^{\top \rho} \circ g_2(x) +h(x) : f\in \cC \};$$  see for instance \cite{wan_geometry_1996}.

We indicate the fact that $\cC$ and $\cC'$ are equivalent codes by the symbol $\cC \simeq \cC'$, and denote by $[\cC]_{\simeq}$ the equivalence class of $\cC$ with respect to relevant equivalence relation.  

Let $g_1,\rho,g_2,h$ be as above. In the following we will use the symbol $\Phi_{g_1,\rho,g_2,h}$ to denote the map of $\mathcal{L}_{(n,q)}[x]$ defined by $$f(x) \mapsto g_1 \circ f^{\rho} \circ g_2(x)+h(x).$$  The \emph{automorphism group} of $\cC$ consists of all $\Phi_{g_1,\rho,g_2,h}$ fixing $\cC$.

In this paper we will be mainly interested in the case when the sets $\cC$ and $\cC'$ are additive. It is not difficult to see that if this is the case, we may assume $h(x)$ to be the null map in the definitions above.

If $n=d$, then $\# \cC \leq q^n$. When the equality holds such a set consists of $q^n$ invertible endomorphisms of $\F_{q^n}$. Hence, $\cC$ is a {\it spread set} of $\mathrm{End}_{\F_q}(\F_{q^n})$, and if $\cC$ is additive this is also equivalent to a {\it semifield spread set} of $\mathrm{End}_{\F_q}(\F_{q^n})$. For more results on semifields and related structures, we refer to \cite{johnson_handbook_2007}, \cite{lavrauw_semifields_2011}.

In the case when $d < n$, the most important example of additive $MRD$-code of $\mathcal{L}_{(n,q)}[x]$, is the so-called {\it Generalized Gabidulin code}. This family was found by Kshevetskiy and Gabidulin in \cite{kshevetskiy_new_2005}. It appeared as a  generalization of the family discovered many years before independently by Gabidulin \cite{gabidulin_MRD_1985} and Delsarte \cite{delsarte_bilinear_1978}, whose elements are nowadays known with the name of Delsarte-Gabidulin codes. 

Precisely, let $k,n$ be positive integers and let $s$ be an integer coprime with $n$; a Generalized Gabidulin code with stated parameters is the set of linearized polynomials
\begin{equation}
\cG_{n,k,s}=  \left\{  \sum_{i=0}^{k-1} a_i x^{q^{si}}  \, : \,\, a_0,a_1,\ldots, a_{k-1} \in \F_{q^n}  \right\}.
\end{equation}

The code $\cG_{n,k,s}$ is an $\F_q$-subspace of $\mathcal{L}_{(n,q)}[x]$ of dimension $kn$, hence it has size $q^{nk}$, and any non-zero element in $\cG_{n,k,s}$ has rank greater than or equal to $d=n-k+1$. Hence, $\cG_{n,k,s}$ is an $\F_{q}$-linear MRD-code with minimum distance $d$. 

In \cite{delsarte_alternating_1975}, \cite{schmidt2} and \cite{schmidt_hermitian}, constructions of this sort have been exhibited for sets of linearized polynomials with prescribed restrictions. Precisely, for polynomials associated with symmetric, alternating and hermitian forms. In all such settings a heavy use of the theory of {\it association schemes} led to the determination of bounds on the size of such $d$-codes. Moreover, in the additive case such bounds are proven to be tight by exhibiting families of $\F_q$-linear examples attaining these bounds. 

In this article we elaborate on such maximum $\F_q$-linear codes. Precisely, in Section 3 we determine their automorphisms group and solve the equivalence issue for them.  In Section $4$, we characterize relevant $d$-codes as the intersection of their ambient space with a suitable code which is equivalent to a generalized Gabidulin code with minimum distance $d$. Finally, in Section 5 we exhibit a symmetric $2$-code of order $q^{2m^2}$, which is not equivalent to the one with same parameters constructed in   \cite{schmidt2}.

\section{Preliminaries}

We start this section by giving a description of the known examples of maximum additive $d$-codes presented in \cite{delsarte_bilinear_1978}, \cite{schmidt2} and \cite{schmidt_hermitian}, in terms of $q$-polynomials. 

In order to do that we first remind the following very well known fact, which in the symmetric setting is stated for instance in \cite[Lemma 13]{schmidt1}:

\begin{prop}\label{bilinearform}
	Let $\ell$ be an arbitrary integer.
	\begin{enumerate}
		\item For each $m$-dimensional $\F_{q}$-subspace $U$ of $\F_{q^n}$, every  bilinear form $B: U \times \F_{q^n} \rightarrow \F_q$ can be written in the following form 
		\begin{equation*}
		B(x,y)=\mathrm{Tr}_{q^n / q}   \Biggl ( \sum_{j=0}^{m-1} a_j yx^{q^{(j-\ell)}} \Biggr ),
		\end{equation*}
		for some uniquely determined $a_0, a_1, \ldots,a_{m-1} \in \F_{q^n}$.
		\item For each $m$-dimensional $\F_{q^2}$-subspace $U$ of $\F_{q^{2n}}$, every Hermitian form $H: U \times \F_{q^n} \rightarrow \F_{q^2}$ can be expressed in the form
		\begin{equation*}
		H(x,y)=\mathrm{Tr}_{q^{2n}/q^2}   \Biggl ( \sum_{j=0}^{m-1} a_j y^{q}x^{q^{2(j-\ell)}} \Biggr ),
		\end{equation*}
		for some uniquely determined $a_0, a_1, \ldots,a_{m-1} \in \F_{q^{2n}}$.
	\end{enumerate}
\end{prop}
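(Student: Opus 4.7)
The plan is to prove both parts by the same dimension-count-plus-injectivity argument: exhibit an $\F_q$-linear (resp.\ $\F_{q^2}$-linear) map from the coefficient space $\F_{q^n}^m$ (resp.\ $\F_{q^{2n}}^m$) into the appropriate space of forms, observe that source and target have equal finite dimension, and then verify injectivity. Existence and uniqueness of the representation will follow simultaneously, so uniqueness is really the crux of the matter.

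For part (1), the target $\F_q$-space of bilinear forms $U\times\F_{q^n}\to\F_q$ has $\F_q$-dimension $\dim_{\F_q}(U)\cdot\dim_{\F_q}(\F_{q^n})=mn$, matching the $\F_q$-dimension of $\F_{q^n}^m$. To prove injectivity, I would suppose
\begin{equation*}
\Tr_{q^n/q}\Big(y\cdot \sum_{j=0}^{m-1} a_j\, x^{q^{j-\ell}}\Big)=0\qquad\text{for all } x\in U,\ y\in\F_{q^n}.
\end{equation*}
Fixing $x\in U$ and letting $y$ range over $\F_{q^n}$, non-degeneracy of the trace form \eqref{form:bilinearform} forces $g(x):=\sum_{j=0}^{m-1}a_j\, x^{q^{j-\ell}}=0$ for every $x\in U$. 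After raising to the $q^{\ell}$-th power (a bijection of $\F_{q^n}$), $g$ becomes a $q$-polynomial of $q$-degree at most $m-1$, which has at most $q^{m-1}$ zeros in $\F_{q^n}$. Since $|U|=q^m$, we conclude $g\equiv 0$, and hence $a_0=\cdots=a_{m-1}=0$.

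Part (2) runs through the identical argument with $\F_{q^2}$ as the base field: the space of $\F_{q^2}$-valued Hermitian forms on $U\times\F_{q^{2n}}$ has $\F_{q^2}$-dimension $mn$, matching $\dim_{\F_{q^2}}\F_{q^{2n}}^m$. For injectivity, the factor $y^{q}$ is harmless because $y\mapsto y^{q}$ permutes $\F_{q^{2n}}$; vanishing of $H$ for all $y$ is therefore equivalent to $\Tr_{q^{2n}/q^2}\big(z\cdot\sum_j a_j\, x^{q^{2(j-\ell)}}\big)=0$ for all $z\in\F_{q^{2n}}$. Non-degeneracy of the $\F_{q^2}$-valued trace form on $\F_{q^{2n}}$ then reduces the question to showing that the $q^2$-polynomial $\sum_{j=0}^{m-1} a_j\, x^{q^{2(j-\ell)}}$, of $q^2$-degree at most $m-1$, vanishes identically; this follows because it already vanishes on the $q^{2m}$ elements of $U$ while a nontrivial such polynomial admits at most $q^{2(m-1)}$ roots.

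No substantial obstacle is anticipated: the argument is driven entirely by non-degeneracy of the trace bilinear form and the classical root bound for linearized polynomials. The only points requiring care are the bookkeeping of negative Frobenius exponents $q^{j-\ell}$ (handled by applying a suitable Frobenius automorphism before invoking the root bound) and correctly switching the base field from $\F_q$ to $\F_{q^2}$ in the Hermitian case.
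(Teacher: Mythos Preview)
The paper does not supply a proof of this proposition; it is stated as ``a very well known fact'' with a reference to \cite[Lemma 13]{schmidt1}, so there is nothing to compare against. Your dimension-count-plus-injectivity argument is correct in both parts: the trace non-degeneracy step and the root bound for linearized polynomials are exactly the right tools, and your handling of the Frobenius shift $q^{j-\ell}$ and of the substitution $z=y^q$ in the Hermitian case is fine. One terminological remark: in part~(2) the ``Hermitian forms'' on the asymmetric domain $U\times\F_{q^{2n}}$ really means sesquilinear forms (linear in the first slot, conjugate-linear in the second), and it is for this $\F_{q^2}$-vector space that the dimension equals $mn$; your computation implicitly uses this, so you may want to say it explicitly.
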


In particular, each bilinear form say $B(\cdot,\cdot)$ defined over $\F_{q^n}$, seen as a vector space over $\F_q$, can be written in the following shape: $$B(x,y)=\mathrm{Tr}_{q^n /q}(f(x)y),$$ where $f(x) \in \mathcal{L}_{(n,q)}[x]$.

\subsection{Known constructions in the symmetric and alternating setting}

A \textit{symmetric} $\F_q$-bilinear form $B$ of $\F_{q^n}$ is a bilinear form such that for each  $x,y \in \F_{q^n}$,
\begin{equation} \label{symmetry}
B(y,x)=B(x,y). 
\end{equation}
By Proposition \ref{bilinearform}, there is  a $q$-polynomial $f(x)$ such that $B(x,y)=\mathrm{Tr}_{q^n/q}(f(x)y),$ and by (\ref{symmetry}) we must have for all $x,y \in \F_{q^n}$, $\mathrm{Tr}_{q^n/q}(f(y)x)=\mathrm{Tr}_{q^n/q}(f(x)y)$. It is routine to verify that
\begin{equation*}
\mathrm{Tr}_{q^n/q}(f(y)x)=\mathrm{Tr}_{q^n/q}(f(x)y)=\mathrm{Tr}_{q^n/q}(xf^{\top}(y)),
\end{equation*}
which means that $f$ is a self-adjoint map with respect to $T$ given in \eqref{form:bilinearform}.

Therefore, by suitably choosing an $\F_q$-basis of $\F_{q^n}$, we can identify the set of symmetric bilinear forms over $\F_{q^n}$, with the $\frac{n(n+1)}{2}$-dimensional subspace $S_{n}(q) \subset \End_{\F_q}( \F_{q^n})$ of self-adjoint $\F_q$-linear maps of $\F_{q^n}$. Precisely, 
 
\begin{equation}\label{symplectic space}
S_n(q)= \Biggl \{  \sum_{i=0}^{n-1} c_i x^{q^{i}}  \, : \, c_{n-i}=c^{q^{(n-i)}}_i \,\,\, \text{for } i \in \{0,1,\ldots, n-1 \} \Biggr \}.
\end{equation}

An \textit{alternating} $\F_q$-bilinear form $B$ of $\F_{q^n}$ instead is a bilinear form such that for all $x \in \F_{q^n},$

\begin{equation} \label{alternate}
B(x,x)=0;
\end{equation}
from which the additional property 

\begin{equation}\label{alternateproperty}
B(x,y)+B(y,x)=0
\end{equation}
follows.

By Proposition \ref{bilinearform}, Equations (\ref{alternate}) and (\ref{alternateproperty}), and again properly choosing an $\F_q$-basis of $\F_{q^n},$ the set of alternating bilinear form with entries running over $\F_q$ can be seen as the following subset of $q$-polynomials:

\begin{equation} \label{alternatingspace}
A_{n}(q)= \Biggl \{  \sum_{i=1}^{n-1} c_i x^{q^{i}}  \, : \, c_{n-i}=-c^{q^{(n-i)}}_i \,\,\, \text{for } i \in \{1,2,\ldots, n-1 \} \Biggr \}.
\end{equation}
Clearly, $A_{n}(q)$ is an $\frac{n(n-1)}{2}$-dimensional subspace of $\End_{\F_q}(\F_{q^n})$ and it is well known that the \textit{rank} of each element of $A_{n}(q)$, is necessarily even.

Denote by the symbol $X_{n}$ either the subspace $S_{n}(q)$ or $A_{n}(q)$.  It is readily verified that for given $a \in \F^*_{q}$, $\rho \in \Aut(\F_q)$, $g$ a permutation $q$-polynomial over $\F_{q^n}$, and $r_0 \in X_{n}$, the map $\Psi: X_{n} \rightarrow X_{n}$ defined by 
\begin{equation}\label{eq:def_equivalence}
	\Psi_{a,g,\rho,r_0}(f) = ag \circ f^{\rho} \circ g^{\top}(x) + r_0(x),
\end{equation}
preserves the rank distance on $X_{n}$. In fact, the converse statement is also true except when $q=2$ and $n=3$ if $X_{n}=S_{n}(q)$, and except when $n \leq 3$ if $X_{n}=A_{n}(q)$; see \cite{wan_geometry_1996}. 

For two subsets $\cC_1$ and $\cC_2$ of $X_n$, if there exists a map $\Psi_{a,g,\rho,r_0}$ defined as in Equation \eqref{eq:def_equivalence} for certain $a$, $g$, $\rho$ and $r_0$ such that
\[ \cC_2 :=\{\Psi_{a,g,\rho,r_0}(f): f\in \cC_1 \},\]
then we say that $\cC_1$ and $\cC_2$ are \emph{equivalent} in $X_{n}$, and to distinguish this relation from the one defined in Section \ref{sec:intro}, we write $\cC_1 \cong \cC_2$.

Regarding upper bounds for such $d$-codes, parts of the following results can be found in  \cite[Theorem 3.3]{schmidt2} and \cite[Corollary 7, Remark 8]{schmidt1}, and the last open case that $q$ and $d$ both even was proved in \cite{schmidt_quadratic}.
\begin{thm}\cite{schmidt_quadratic}\label{th:upper_bound}
	Let $\cC$ be a $d$-code in $\cS_n(q)$, where $\cC$ is required to be additive if $d$ is even. Then
	\begin{equation}\label{eq:upperbound_general}
		\#\cC \le \left\{
		\begin{array}{ll}
		q^{n(n-d+2)/2}, & \text{if $n-d$ is even;} \\ 
		q^{(n+1)(n-d+1)/2}, & \text{if $n-d$ is odd.}
		\end{array} 
		\right.
	\end{equation}
\end{thm}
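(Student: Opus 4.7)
My plan is to apply Delsarte's linear programming (LP) bound to the translation association scheme on $\cS_n(q)$ whose relations are indexed by the rank of the difference $A-B$, refined by the Witt type of the associated quadratic form where appropriate. First I would set up (or recall) the eigenmatrix $P=(P_r(j))$ of this scheme, given by explicit $q$-Krawtchouk-type polynomials in the Delsarte--Goethals tradition and sharpened by Wang--Wang--Ma. For an additive $d$-code $\cC\subset\cS_n(q)$ the inner distribution $(a_0,\ldots,a_n)$ satisfies $a_0=1$, $a_r=0$ for $0<r<d$, and $\sum_r a_r P_r(j)\ge 0$ for every $j$, with $|\cC|=\sum_r a_r$; the bound then follows by LP duality.

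Concretely, LP duality would be realized by packaging the dual variables into an \emph{anti-code}: an additive subset $\cA\subset\cS_n(q)$ whose nonzero elements all have rank at most $d-1$. The elementary choice $\cA=\{A\in\cS_n(q):AV=0\}$ for a fixed $(n-d+1)$-dimensional $V\subset\F_q^n$ has size $q^{d(d-1)/2}$ and delivers only $|\cC|\le q^{(n-d+1)(n+d)/2}$, which is sharp only when $d=1$. The stronger bound requires a larger, type-refined anti-code whose dimension depends on the parity of $n-d$; for $n-d$ odd one gains an extra factor from the two possible Witt types of a rank-$(d-1)$ form, producing $(n+1)/2$ instead of $n/2$ in the exponent. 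For the purely additive case one may alternatively carry out a delicate induction on $n$, combining projection onto the top-left $(n-1)\times(n-1)$ block (which contracts the distance by at most $2$, giving a recursion between $(n,d)$ and $(n-1,d-2)$) with an anti-code bound on the kernel, and checking that the claimed formulas are preserved under this recursion.

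The main obstacle, and the reason the statement is spread over three papers, is the case in which $q$ and $d$ are both even. In characteristic two every alternating form is symmetric, so the rank-based scheme is too coarse and one has to pass to the associated \emph{quadratic} forms, keeping track of the Arf invariant. The resulting refined LP (or a tailored combinatorial argument on pairs form/polarization) is exactly what is carried out in \cite{schmidt_quadratic}; combined with the cleaner LP computations of \cite{schmidt1,schmidt2} for the remaining parities, this yields the theorem.
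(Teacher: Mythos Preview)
The paper does not prove this theorem at all: it is stated as a quotation of results from \cite{schmidt1}, \cite{schmidt2} and \cite{schmidt_quadratic}, with the sentence preceding the statement explicitly attributing the various parity cases to those references and noting that the last open case ($q$ and $d$ both even) was only settled in \cite{schmidt_quadratic}. There is therefore no ``paper's own proof'' to compare your proposal against.

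That said, your outline is a fair high-level description of what actually happens in the cited Schmidt papers: the bound is obtained via the linear-programming method on the relevant association scheme, the odd-characteristic and odd-$d$ cases are handled by the symmetric-bilinear-form scheme, and the residual even/even case requires passing to the finer scheme of quadratic forms (tracking the type, i.e.\ the Arf-type invariant), which is precisely the content of \cite{schmidt_quadratic}. Your anti-code paragraph is more heuristic than the actual arguments (the sharp bounds in \cite{schmidt2} and \cite{schmidt_quadratic} come from explicit LP solutions using the $q$-Krawtchouk eigenvalues rather than from exhibiting a single larger anti-code), and the projection/induction sketch you mention is not the route taken there; but as a summary of why the three references are needed and how they fit together, your proposal is on target. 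For the purposes of the present paper, simply citing the theorem as the authors do is the appropriate course.
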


Recall that in the alternating setting, the rank of matrices are always even. We have a result of the same sort due to Delsarte and Goethals; precisely,

\begin{thm}\cite{delsarte_alternating_1975}\label{th:upper_bound_alternating_association_scheme}
	Let $m=\lfloor \frac{n}{2} \rfloor$ and assume that $\cC$ is any $2e$-code in $A_n(q)$, then $$\# \cC \leq q^{\frac{n(n-1)}{2m}(m-e+1)}.$$
\end{thm}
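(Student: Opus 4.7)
The plan is to apply Delsarte's linear-programming bound in the commutative translation association scheme $(A_n(q);\, R_0, R_1, \ldots, R_m)$ whose classes are defined by $R_i = \{(M, M') : \mathrm{rank}(M - M') = 2i\}$ for $i = 0, 1, \ldots, m$. Translation invariance under the addition on $A_n(q)$ is immediate, and the intersection numbers are well-defined because $\mathrm{GL}_n(\F_q)$ acts by congruence transitively on alternating forms of each prescribed rank. In particular, the valency of $R_i$ equals $v_i = |\Omega_{2i}|$ where $\Omega_{2i} = \{M \in A_n(q) : \mathrm{rank}(M) = 2i\}$.

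The first step is to compute the eigenmatrix $P = (P_j(i))$ of the scheme. Fixing a nontrivial additive character $\psi$ of $\F_q$ and a representative $N_i \in \Omega_{2i}$, one evaluates
\[
P_j(i) = \sum_{M \in \Omega_{2j}} \psi\bigl(\Tr(M N_i)\bigr).
\]
Stratifying $M$ by the rank of its restriction to $\ker N_i$ and summing by a Gaussian-type identity expresses each $P_j(i)$ as an affine $q$-Krawtchouk polynomial, and one then certifies that the scheme is $Q$-polynomial with the natural ordering.

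Next, with $\cC \subset A_n(q)$ a $2e$-code containing $0$ and inner distribution $a_j = |\{M \in \cC : \mathrm{rank}(M) = 2j\}|$, one has $a_0 = 1$, $a_j = 0$ for $1 \le j \le e - 1$, and Delsarte's LP inequalities read $\sum_j a_j Q_k(j) \ge 0$ for every $k$ while $|\cC| = \sum_j a_j$. Following Delsarte's standard recipe, one constructs a polynomial $F(z) = \sum_{k = 0}^{m - e + 1} f_k Q_k(z)$ of degree $m - e + 1$ with $f_k \ge 0$, $F(0) > 0$, and $F(j) \le 0$ for $j \in \{e, \ldots, m\}$; then $|\cC| \le F(0)/f_0$. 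Inserting the explicit eigenvalues from the previous step collapses this to the claimed bound $q^{\frac{n(n-1)}{2m}(m - e + 1)}$.

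The main obstacle will be the technical evaluation of the eigenvalues $P_j(i)$ and the verification that the candidate $F$ satisfies the requisite positivity; both rest on $q$-identities for the generalized Krawtchouk polynomials attached to the alternating-forms scheme --- classical but intricate. As an elementary sanity check, one may restrict alternating forms to an $(n - e + 1)$-dimensional subspace $V \le \F_q^n$: if two codewords agree on $V$, then $V$ is totally isotropic for their difference, forcing $\mathrm{rank} \le 2(e - 1) < 2e$ and contradicting the code property. This yields only the weaker bound $|\cC| \le q^{\binom{n - e + 1}{2}}$, which matches Delsarte--Goethals only in very special cases and confirms that the full LP machinery really is needed.
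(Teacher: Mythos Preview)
The paper does not supply its own proof of this theorem; it is quoted verbatim from Delsarte--Goethals \cite{delsarte_alternating_1975} as background, so there is nothing in the present paper to compare against. Your outline is precisely the association-scheme/LP approach of that original reference: one shows the alternating-forms scheme is $Q$-polynomial, computes its eigenmatrix via the generalized ($q$-affine) Krawtchouk polynomials, and then produces a dual-feasible polynomial of degree $m-e+1$ whose value at $0$ divided by its leading dual coefficient gives the Singleton-type bound. So methodologically you are on the same track as the cited source.

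That said, what you have written is a \emph{plan}, not a proof. The two places where all the content lives---the explicit evaluation of $P_j(i)$ (equivalently $Q_k(j)$) and the verification that your candidate $F$ has $f_k\ge 0$ and $F(j)\le 0$ on $\{e,\dots,m\}$---are asserted rather than carried out. In Delsarte--Goethals these steps occupy several pages of $q$-binomial identities and a careful choice of $F$ (essentially an annihilator polynomial built from the $Q$-polynomial structure); simply naming ``Delsarte's standard recipe'' does not discharge them. If you intend this as a self-contained proof you must either reproduce those computations or give precise pointers (theorem numbers) in \cite{delsarte_alternating_1975} for each step. Your subspace-restriction sanity check is correct and a nice remark, but as you note it only recovers the weaker bound $q^{\binom{n-e+1}{2}}$ and cannot substitute for the LP argument.
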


Also in \cite{delsarte_alternating_1975}, Delsarte and Goethals exhibited a class of $\F_q$-linear maximal codes in $A_{n}(q)$ for any characteristic,  and any odd value of $n$. 

Precisely, let $ 2 \leq d=2e \leq n-1$, and let $s$ be an integer coprime with $n$.  Then the set of $q$-polynomials
\begin{equation}\label{eq:alternatingcode}
\mathcal{A}_{n,d,s}=  \Biggl \{  \sum_{i=e}^{\frac{n-1}{2}} \biggl ( b_i x^{q^{si}}- (b_i x)^{q^{s(n-i)}} \biggr ) \, : \,\, b_{e},\ldots, b_{\frac{n-1}{2}} \in \F_{q^n}  \Biggr \}
\end{equation}
is a maximum alternating $d$-code \cite[Theorem 7]{delsarte_alternating_1975}.

In \cite{schmidt2}, Kai-Uwe Schmidt presented the following class of additive (in fact, $\F_q$-linear) codes in $S_{n}(q)$.  For any integer $1 \leq d \,\, \leq \,\, n$ such that $n-d$ is even and $s$ coprime with $n$, consider the following subset of  $S_n(q)$:
\begin{equation} \label{Schimdtcode}
\mathcal{S}_{n,d,s} = \Biggl \{ b_0 x+  \sum_{i=1}^{\frac{n-d}{2}}   \Bigl ( b_i x^{q^{si}}+(b_ix)^{q^{s(n-i)}}  \Bigr ) \, : \,\, b_0, b_1, \ldots, b_{\frac{n-d}{2}} \in \F_{q^n}  \Biggr \}.
\end{equation}

The set $\mathcal{S}_{n,d,s}$ turns out to be a  maximum $d$-code \cite[Theorem 4.4]{schmidt2}. Also in \cite{schmidt2} the author showed that for any such  $d$, it is always possible to construct a maximal $d$-code of $S_{n}(q)$ with $n-d$ an odd integer; in fact, by simply \textit{puncturing} the $(d+2)$-code $\cS_{n+1,d+2,s}$ of $S_{n+1}(q)$ \cite[Theorem 4.1]{schmidt2}.

\subsection{Known constructions in the Hermitian setting}

Let $\F_{q^{2n}}$ be the finite field of order $q^{2n}$ equipped with the involuntary automorphism  $a \mapsto a^q$ of $\F_{q^2}$. 

A \textit{Hermitian form} on $\F_{q^{2n}}$, is a map
\begin{equation*}
H:\F_{q^{2n}} \times \F_{q^{2n}} \rightarrow  \F_{q^2}
\end{equation*}
which is $\F_{q^2}$-linear in the first coordinate and satisfies the following property
\begin{equation}\label{hermitiancondition}
 H(y,x)=H(x,y)^q,
\end{equation}
for all $x,y \in \F_{q^{2n}}$. 

It  is easy to check that for all $x \in \F_{q^{2n}}$, $\mathrm{Tr}_{q^{2n}/q^2}(x)^q=\mathrm{Tr}_{q^{2n}/q^2}(x^q)$.  

Also, the map
\begin{equation*}
	S: (x,y) \in \F_{q^{2n}} \times \F_{q^{2n}} \rightarrow \mathrm{Tr}_{q^{2n}/q^2}(x y^q)
\end{equation*}
is a non-degenerate sesquilinear form of $\F_{q^{2n}}$ with companion automorphism  $a \mapsto a^q$.

Again by Proposition \ref{bilinearform} (b), every such a sesquilinear form can be written in the following fashion:
\begin{equation*} \label{hermitiantrace}
H(x,y)=S(f(x),y)=\mathrm{Tr}_{q^{2n}/q^2}(f(x)\, y^q),
\end{equation*}
where $f(x) \in  \mathcal{L}_{(n,q^2)}[x] $ is a $q^2$-polynomial with coefficients in $\F_{q^{2n}}$. 

Now, let $f(x)=\sum^{n-1}_{i=0} a_i x^{q^{2i}}$ be an element of $\mathcal{L}_{(n,q)}[x]$ (which can be viewed as an element of $\End_{\F_{q^2}}(\F_{q^{2n}})$). It is easy to show that $S(f(x),y)^q= S(\tilde{f}(y),x)$ for all $x,y \in  \F_{q^{2n}}$ where
\begin{equation*}
\tilde{f}(x)=f^{\top q}(x^{q^2})=\sum_{i=0}^{n-1} a^{q^{2n-2i+1}}_ix^{q^{2(n-i+1)}}.
\end{equation*}Here $f^\top$ denotes the adjoint map of $f$ as an $\F_{q^2}$-linear map, i.e.,\ $f^\top =\sum_{i=0}^{n-1} a_{n-i}^{q^{2i}}x^{q^{2i}}$. It is routine to verify that $\tilde{(\cdot)}$ is involutionary on each $\F_{q^2}$-linear map.

Then by (\ref{hermitiancondition}), we obtain
\begin{equation*}
S(f(y),x)=H(y,x)=H(x,y)^q=S(f(x),y)^q=S(\tilde{f}(y),x)
\end{equation*}
for all $x,y \in \F_{q^{2n}}$.
 
Hence, we may identify the set of Hermitian forms defined on $\F_{q^{2n}}$  with the set of $q^2$-polynomials

\begin{equation} \label{hermitianspace}
H_n(q^2)=
\Biggl \{\sum_{i=0}^{n-1} c_i x^{q^{2i}}  \, : \,\, c_{n-i+1}=c^{q^{2n-2i+1}}_i, \,\,\,\,\, i\in \{0,1,2,\ldots,n-1\} \Biggr \},
\end{equation}
where the indices of the $c_i$'s are taken modulo $n$. The set $H_n(q^2)$ is an $n^2$-dimensional $\F_q$-vector subspace of $\End_{\F_{q^2}}(\F_{q^{2n}})$. We explicitly note that if $f(x)=\sum_{i=1}^{n-1} c_i x^{q^{2i}}  \in H_{n}(q^2)$ with $n$ odd, then $c_{(n+1)/2} \in \F_{q^n}$. 

For given $a \in \F^*_{q}$, $\rho \in \Aut(\F_{q^2})$, $g$ a permutation $q^2$-polynomial over $\F_{q^{2n}}$, and $r_0 \in H_{n}(q^2)$, the map $\Theta: H_{n} (q^2)\rightarrow H_{n}(q^2)$ defined by 
\begin{equation}\label{eq:def_equivalence_hermitian_setting}
	\Theta_{a,g,\rho,r_0}(f) = ag \circ f^{\rho} \circ g^{{\top}q^{2n-1}}(x) + r_0(x), 
\end{equation} preserves the rank distance. The converse statement is also true, see \cite{wan_geometry_1996}  .

In this context if for $\cC_1,$ and $\cC_2 \in H_n(q^2)$, there exists a map $\Theta_{a,g,\rho,r_0}$ defined as in Equation \eqref{eq:def_equivalence_hermitian_setting} for certain $a$, $g$, $\rho$ and $r_0$ such that
\[ \cC_2 :=\{\Theta_{a,g,\rho,r_0}(f): f\in \cC_1 \},\]
then we say that $\cC_1$ and $\cC_2$ are \emph{equivalent} in $H_{n}(q^2)$, and write $\cC_1 \cong \cC_2$.

Regarding upper bounds for codes in this context, we may state the following result.
\begin{thm}\cite[Theorem 1]{schmidt_hermitian}\label{th:upper_bound_hermitian_association_scheme}
Assume that $\cC$ is an additive $d$-code in $H_n(q^2)$, then $$\# \cC \leq q^{n(n-d+1)}.$$
Moreover, when $d$ is odd, this upper bound also holds for non-additive $d$-codes.
\end{thm}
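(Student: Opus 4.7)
My plan is to establish the bound via Delsarte's linear programming method applied to the translation association scheme whose points are $H_n(q^2)$ and whose relations $R_0,R_1,\ldots,R_n$ are indexed by the rank of the difference. The first step is to recall that $H_n(q^2)$ under the rank-distance partition is a commutative translation scheme of class $n$ with an explicit $q$-Krawtchouk-type eigenvalue matrix $(Q_k(i))_{0\leq i,k\leq n}$. For an additive code $\cC$, viewed as a subgroup of the underlying abelian group $(H_n(q^2),+)$, the inner distribution $A_i := \#\{f\in\cC : \mathrm{rank}(f)=i\}$ satisfies $A_0=1$ and $A_i=0$ for $1\leq i\leq d-1$, and Delsarte's theorem then gives $A'_k := |\cC|^{-1}\sum_i A_i Q_k(i) \geq 0$ for every $k$.

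The second step is the LP-duality argument. I would construct a polynomial $F(x)=\sum_k c_k Q_k(x)$ with $c_k\geq 0$ and $F(i)\leq 0$ for $d\leq i\leq n$, and then run the standard chain
\begin{equation*}
|\cC|\,c_0 \,\leq\, |\cC|\sum_{k} c_k A'_k \,=\, \sum_{i} A_i F(i) \,\leq\, F(0),
\end{equation*}
where the last inequality uses $A_i F(i)\leq 0$ for $i\geq d$ together with $A_i=0$ for $0<i<d$, and the first uses $A'_0=1$ and $A'_k\geq 0$. Taking $F$ to be the appropriate $q$-analogue of the Singleton annihilator polynomial (of formal $q$-degree $d-1$, vanishing at the forbidden ranks) gives $F(0)/c_0 = q^{n(n-d+1)}$ and hence the claimed bound.

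For the non-additive case with $d$ odd, one replaces $A_i$ by the averaged distance distribution $\bar A_i := |\cC|^{-1}\#\{(f,g)\in\cC^2 : \mathrm{rank}(f-g)=i\}$; the Delsarte inequalities $\bar A'_k\geq 0$ hold in any translation scheme without the additivity hypothesis, so the same polynomial $F$ yields the same bound provided its feasibility survives the change, which is what the parity hypothesis on $d$ is used to secure (it rules out the fractional rank-contributions that would otherwise block closing the LP). The main obstacle I expect is the construction and feasibility verification of $F$ in the $Q_k$-basis, i.e., showing that the expansion coefficients $c_k$ are non-negative; this is a nontrivial identity for the $q$-Krawtchouk polynomials of the Hermitian scheme and forms the technical core of the argument in \cite{schmidt_hermitian}.
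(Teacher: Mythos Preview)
The paper does not contain a proof of this statement; the theorem is quoted from \cite[Theorem~1]{schmidt_hermitian} and used as a black box (for instance in the size argument inside the proof of Corollary~\ref{cor:aut_groups}). There is therefore nothing in the present paper to compare your proposal against.

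That said, your sketch---Delsarte's linear-programming bound on the Hermitian-forms translation scheme, with a $q$-Krawtchouk annihilator polynomial playing the role of the Singleton polynomial---is indeed the framework in which the cited reference establishes the result, so at the level of strategy you are on the right track. Your last paragraph, however, is a placeholder rather than an argument: the phrase about the parity of $d$ ``ruling out fractional rank-contributions'' does not correspond to any concrete step, and the actual content of the proof in \cite{schmidt_hermitian} lies precisely in exhibiting the annihilator explicitly and verifying non-negativity of its $Q$-expansion coefficients, together with a separate treatment showing why the additive hypothesis can be dropped when $d$ is odd. If you intend to reproduce the proof rather than cite it, that verification is what you would need to supply.
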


Let $s$ be an odd integer coprime with $n$. The following two classes of $\F_q$-linear codes in $H_n(q^2)$, were presented in \cite{schmidt_hermitian} only for $s=1$. However, the case with $s \in \mathbb{Z}, \,s \neq 1$ can be easily proved with the same technique used for generalizing Gabidulin codes in \cite{kshevetskiy_new_2005,Sheekey,lunardon_trombetti_zhou_2018}.

Suppose that $n$ and $d$ are integers with opposite parity such that $1 \leq d \leq n-1$. Then, the set
\begin{equation}\label{eq:hermitiancodeoppositeparity}
	\cH_{n,d,s}= \biggl \{\sum_{j=1}^{\frac{n-d+1}{2}} \biggl ( (b_jx)^{q^{2s(n-j+1)}}+b^{q^{s}}_jx^{q^{2sj}}  \biggr ) : \,\, b_1, b_2, \ldots, b_{\frac{n-d+1}{2}} \in \F_{q^{2n}} \biggr \},
\end{equation} 
is  a maximum $\F_q$-linear Hermitian $d$-code \cite[Theorem 4]{schmidt_hermitian}.

Also, suppose that $n$ and $d$ are both odd integers such that $1 \leq d \leq n$ and $s$ as above; then, the set 
\begin{equation}\label{eq:hermitiancodeodd}
\begin{aligned}
	\mathcal{E}_{n,d,s}=\biggl \{ (b_0\,x)^{q^{s(n+1)}}+ \sum_{j=1}^{\frac{n-d}{2}} \biggl ((b_jx)^{q^{s(n+2j+1)}}+b^{q^{s}}_jx^{q^{s(n-2j+1)}}  \biggr ) : \,\, b_0 \in \F_{q^n} \\ \,\,\text{and} \,\, b_1, \ldots, b_{(n-d)/2} \in \F_{q^{2n}}\biggr \}
\end{aligned}
\end{equation}
is a maximum $\F_q$-linear Hermitian $d$-code \cite[Theorem 5]{schmidt_hermitian}.

\section{Automorphism groups of known constructions}

Recall that the symbol $X_{n}$ denotes here one of the subspaces $S_{n}(q)$ and $A_n(q)$ of $\mathrm{End}_{\F_{q}}(\F_{q^n})$. Instead the symbol $H_n(q^2)$ is used to denote the $n^2$-dimensional $\F_q$-subspace of $\mathrm{End}_{\F_{q^2}}(\F_{q^{2n}})$ associated with a Hermitian form defined on $\F_{q^{2n}}$, with companion automorphism $a \mapsto a^q$. 

The aim here is determining the automorphism group of examples introduced in previous section. 

We start by giving an alternative description of such $d$-codes in terms of the intersection of their ambient space with suitable subspaces of $\mathcal{L}_{(n,q)}[x]$ (or of $\mathcal{L}_{(n,q^2)}[x]$, when dealing with the Hermitian setting). Precisely,

\begin{prop}\label{prop:d-codesproperty}
Let $n, s$ and $d$ be integers such that $1 \leq d \leq n$ and  $\gcd(s,n)=1$. Let ${\mathcal G} = \mathcal{G}_{n,n-d+1,s} \subset \mathcal{L}_{(n,q)}[x] $ be the generalized  Gabidulin code with minimum distance $d$, then we have the following
	
	\begin{enumerate}
		\item $\cS_{n,d,s}= \cG' \cap  S_n(q),$ where $\cG' =\mathcal{G} \circ x^{q^{s(\frac{n+d}{2})}}$.
		\item $\cA_{n,d,s}= \cG' \cap  A_n(q),$ where $\cG' =\mathcal{G} \circ x^{q^{s{\frac{d}{2}}}}$.
	\end{enumerate}
	
Moreover, let ${\mathcal G} = \mathcal{G}_{n,n-d+1,s} \subset \mathcal{L}_{(n,q^2)}[x] $ be the generalized  Gabidulin code with minimum distance $d$, then we have the following
	
	\begin{enumerate}
		\setcounter{enumi}{2}
		\item	$\cH_{n,d,s}= \cG' \cap  H_n(q^2),$ where $\cG' =\mathcal{G} \circ x^{q^{s(n+d+1)}}$.
		\item	$\mathcal{E}_{n,d,s}= \cG' \cap  H_n(q^2),$ where $\cG' =\mathcal{G} \circ x^{q^{s(d+1)}}$.
	\end{enumerate} 
\end{prop}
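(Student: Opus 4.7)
The plan is to verify each of the four identities by double inclusion with a uniform strategy; as a sanity check, both sides end up with the same cardinality (the $\F_q$-dimension of the parameterized code equals the number of free $\F_{q^n}$ or $\F_{q^{2n}}$ parameters, which matches $\dim_{\F_q}(\cG' \cap X)$ after matching supports).

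First I would compute the $q$-exponent support of $\cG'$ modulo $n$. For part (1), $\cG_{n,n-d+1,s}$ consists of polynomials $\sum_{i=0}^{n-d} a_i x^{q^{si}}$, and composing on the right with $x^{q^{s(n+d)/2}}$ shifts the exponents, giving $\cG' = \{\sum_{i=0}^{n-d} a_i x^{q^{s(i+(n+d)/2)}}\}$. The reparameterization $k = i - (n-d)/2$ rewrites this exponent set in the symmetric form $\{sk \bmod n : -(n-d)/2 \le k \le (n-d)/2\}$; since $\gcd(s,n)=1$, these are $n-d+1$ distinct residues. The same calculation, with the appropriate shift in each of parts (2)--(4), shows that the support of $\cG'$ is precisely the set of $q$-exponents appearing in the explicit parameterization of $\cA_{n,d,s}$, $\cH_{n,d,s}$, $\mathcal{E}_{n,d,s}$ from (\ref{eq:alternatingcode}), (\ref{eq:hermitiancodeoppositeparity}) and (\ref{eq:hermitiancodeodd}).

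For the inclusion ``$\subseteq$'', every polynomial in the parameterized code has $q$-exponent support contained in the support of $\cG'$ just described, hence lies in $\cG'$; moreover, by construction, the Frobenius relations linking the pairs of coefficients $b_i$ and $b_i^{q^{\cdot}}$ in the explicit formulas are exactly the self-adjoint, alternating or Hermitian conditions defining $X_n$ or $H_n(q^2)$. For the reverse inclusion, I take $f = \sum_j c_j x^{q^j} \in \cG' \cap X$; its support lies in the common exponent set, and the defining symmetry of $X$ pairs each coefficient $c_j$ with $c_{n-j}$ up to a Frobenius power, which is precisely the pairing $k \leftrightarrow -k$ detected above. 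Setting the free parameter $b_i$ to the coefficient at the ``positive'' member of each pair recovers $f$ in the parameterized form. In the odd-$n$ cases (parts (2) and (4)) the middle exponent, where the involution $k \mapsto -k$ has a fixed point, is treated separately, and this is what forces the corresponding coefficient into a subfield, e.g.,\ $b_0 \in \F_{q^n}$ in $\mathcal{E}_{n,d,s}$.

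The main obstacle I foresee is the bookkeeping, especially in parts (3) and (4) where one works inside $\mathcal{L}_{(n,q^2)}[x]$, exponents appear in the form $q^{2i}$, and the Hermitian relation $c_{n-i+1} = c_i^{q^{2n-2i+1}}$ involves a half-Frobenius twist arising from the sesquilinearity. The recurring technical step is the identification of $q^{s(n-i)}$ and $q^{n-si}$ when acting on $\F_{q^n}$ (and its analogue for $\F_{q^{2n}}$), which is what makes the Frobenius exponent in the explicit parameterization agree with the self-adjoint/Hermitian relation. This is routine but must be checked carefully in each of the four cases.
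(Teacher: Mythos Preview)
Your approach is essentially the same as the paper's: compute the exponent support of $\cG'$ by explicit reindexing, then impose the self-adjoint/alternating/Hermitian constraint on the coefficients to recover the parameterized code. The paper phrases this as a direct computation of $\cG'\cap X$ rather than a double inclusion, but the content is identical.

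One small bookkeeping slip: the ``middle exponent'' fixed by the relevant involution occurs precisely when the number $n-d+1$ of exponents in $\cG'$ is odd, i.e.\ when $n-d$ is even. That happens in parts~(1) and~(4), not in parts~(2) and~(4) as you wrote. In part~(1) the fixed coefficient is $b_0\in\F_{q^n}$ with the self-adjoint condition $c_0=c_0$ imposing no further restriction; in part~(4) the Hermitian condition at the fixed index forces $b_0\in\F_{q^n}$, as you correctly noted. In part~(2) (alternating, $n$ odd, $d$ even) there is no middle term and no subfield constraint appears. This does not affect your strategy, only the case-by-case verification.
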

\proof Let $f (x)=  \sum_{i=0}^{n-d} a_i x^{q^{si}}$ be an element of $\mathcal{G}_{n,n-d+1,s}$. Each element in  $\mathcal{G}'=\mathcal{G}_{n,n-d+1,s} \circ x^{q^{s(\frac{n+d}{2})}}$ has the following form:
\begin{equation*}
  \sum_{i=0}^{n-d} a_i  x^{q^{s(\frac{n+d}{2}+i)}} = \,\, \sum_{i=0}^{\frac{n-d}{2}-1} a_i  x^{q^{s(\frac{n+d}{2}+i)}} +  \sum_{i=\frac{n-d}{2}}^{n-d} a_i  x^{q^{s(\frac{n+d}{2}+i)}} =
\end{equation*}
\begin{equation*}
\sum_{j=0}^{\frac{n-d}{2}} a_{j+\frac{n-d}{2}}  x^{q^{sj}} + \sum_{j=\frac{n+d}{2}}^{n-1} a_{j-\frac{n+d}{2}}  x^{q^{sj}} =
\end{equation*}

\begin{equation} 
a_{\frac{n-d}{2}}x + \sum_{j=1}^{\frac{n-d}{2}} \bigl ( a_{\frac{n-d}{2}+j}  x^{q^{sj}} + a_{\frac{n-d}{2}-j}  x^{q^{s(n-j)}} \bigr ).
\end{equation}

It is clear that $\cG'^{\top}=\cG'$, and by intersecting $\cG'$ with $S_n(q)$, we get the following conditions
\begin{equation*}
 a_{\frac{n-d}{2}-i}=a^{q^{s(n-i)}}_{\frac{n-d}{2}+i}, \,\,\,\,\,\,\,\, i=1,2,\ldots,\frac{n-d}{2}.
\end{equation*}
Hence, each element in $\cG' \cap S_n(q)$ has the following shape:
\begin{equation} \label{forma}
a_{\frac{n-d}{2}} x + \sum_{i=1}^{\frac{n-d}{2}} \bigl ( a_{\frac{n-d}{2}+i}  x^{q^{si}} + (a_{\frac{n-d}{2}+i}  x)^{q^{s(n-i)}} \bigr ). 
\end{equation}

This proves $(i)$. Point $(ii)$ is obtained arguing in the same way. 

Regarding $(iii)$ and $(iv)$, let $f (x)=  \sum_{i=0}^{n-d} a_i^{q^s} x^{q^{2si}}$ be any element in $\mathcal{G}_{n,n-d+1,s} \subset \mathcal{L}_{(n,q^2)}[x] $.  Composing $f(x)$ on the right with the monomial $x^{q^{s(n+d+1)}}$, we obtain
\begin{equation*}
a^{q^s}_0 x^{q^{s(n+d+1)}}+\sum_{i=1}^{\frac{n-d-1}{2}} a^{q^s}_i  x^{q^{2s \bigl (\frac{n+d+1}{2}+i \bigr)}} + \sum_{i=\frac{n-d+1}{2}}^{n-d} a^{q^s}_{i} \, x^{q^{ 2s \bigl (\frac{n+d+1}{2}+i\bigr )} }= 
\end{equation*}
\begin{equation*}
\sum_{j=0}^{\frac{n-d+1}{2}} a^{q^s}_{j+\frac{n-d-1}{2}} \,\, x^{q^{2sj}} + \sum_{j=\frac{n+d+1}{2}}^{n-1} a^{q^s}_{j-\frac{n+d+1}{2}} \, x^{q^{ 2sj}}= 
\end{equation*}
\begin{equation*}
\sum_{j=1}^{\frac{n-d+1}{2}}  \biggl ( a^{q^s}_{\frac{n-d+1}{2}-j} \,\, x^{q^{s(2n-2j+2)}} +a^{q^s}_{\frac{n-d-1}{2}+j} \, x^{q^{ 2sj}}\biggr ). 
\end{equation*}
By intersecting $\cG'$ with $H_n(q^2)$, we get the following conditions 
\begin{equation*}
c^{q^{s(2n-2j+1)}}_{j}=a^{q^{s(2n-2j+2)}}_{\frac{n-d-1}{2}+j}=c_{n-j+1}=a^{q^s}_{\frac{n-d+1}{2}-j}, \qquad \textnormal{for} \,\,\,j=1,2,\ldots,\frac{n-d+1}{2}.
\end{equation*}
Hence, we get 
\begin{equation*}
\cG' \cap H_n(q^2) = \cH_{n,d,s}. 
\end{equation*}

In a similar way, by composing an element $f(x) \in \mathcal{G}_{n,n-d+1,s}$ with $x \mapsto x^{q^{s(d+1)}}$, we obtain
\begin{equation*}
\sum_{i=0}^{n-d} a^{q^s}_i  x^{q^{2s \bigl (\frac{d+1}{2}+i \bigr)}}=
a^{q^s}_{\frac{n-d}{2}} \, x^{q^{s(n+1)}}+\sum_{i=0}^{\frac{n-d}{2}-1} a^{q^s}_{i} \,\, x^{q^{s(2i+d+1)}} + \sum_{i=\frac{n-d}{2}+1}^{n-d} a^{q^s}_{i} \, x^{q^{ s(2i+d+1)}}= 
\end{equation*}
\begin{equation*}
a^{q^s}_{\frac{n-d}{2}} \, x^{q^{s(n+1)}}+\sum_{i=1}^{\frac{n-d}{2}} a^{q^s}_{i-1} \,\, x^{q^{s(2i+d-1)}} + \sum_{j=\frac{n-d}{2}+1}^{n-d} a^{q^s}_{j} \, x^{q^{ s(2j+d+1)}}.
\end{equation*}
Setting $i=\frac{n-d}{2}-l+1$ and $j=\frac{n-d}{2}+m$, we obtain

\begin{equation*}
a^{q^s}_{\frac{n-d}{2}} \, x^{q^{s(n+1)}}+\sum_{l=1}^{\frac{n-d}{2}} a^{q^s}_{\frac{n-d}{2}-l} \,\, x^{q^{s(n-2l+1)}} + \sum_{m=1}^{\frac{n-d}{2}} a^{q^s}_{\frac{n-d}{2}+m} \, x^{q^{s( n+2m+1)}}=
\end{equation*}

\begin{equation*}
a^{q^s}_{\frac{n-d}{2}} \, x^{q^{s(n+1)}}+\sum_{j=1}^{\frac{n-d}{2}} \bigl ( a^{q^s}_{\frac{n-d}{2}-j} \,\, x^{q^{n-2j+1}} + a^{q^s}_{\frac{n-d}{2}+j} \, x^{q^{ s(n+2j+1)}} \bigr ).
\end{equation*}
Again by intersecting $\cG'$ with the Hermitian space $H_n(q^2)$, we get:
\begin{equation*}
	\begin{cases}
	a^{q^s}_{\frac{n-d}{2}} \in \F_{q^n} \\
	\\
	c_{\frac{n+1}{2}+j}^{q^{s(2n-2j)}}=a_{\frac{n-d}{2}+j}^{q^{s(2n-2j+1)}}=c_{\frac{n+1}{2}-j}= a^{q^s}_{\frac{n-d}{2}-j},
	\end{cases}
\end{equation*}
which finally gives the result.
\endproof

Regarding the punctured set obtained from $\cS_{n+1,d+2,s}$,  we can consider $\F_{q^{n+1}}\simeq\V \oplus \K$, where $\K=\langle \eta \rangle_q$ with $\eta \in \F^*_{q^{n+1}}$ and $\V$ is an $n$-dimensional $\F_q$-subspace of $\F_{q^{n+1}}$.\\
Let $s$ be a positive integer coprime with $n+1$, let $1 \leq d \leq n-1$ such that $n-d$ is odd, and consider the $\F_q$-vector space $\cU'_\eta$ of $\cG'=\cG_{n+1,n-d+2,s} \circ x^{q^{s \bigl (\frac{n+d+1}{2} \bigr )}}$ defined as follows 
\begin{equation} \label{subspace}
\begin{aligned}
\cU'_\eta=\Biggl \{ 
\sum_{i=1}^{\frac{n-d+1}{2}} \bigl ( c_{i}(x^{q^{si}}-x\eta^{q^{si}-1}) + 
 c_{n+1-i}(  x^{q^{s(n+1-i)}}-x\eta^{q^{s(n+1-i)}-1}) \bigr ) \\
 \,  :
  \,\, c_i,c_{n+1-i}\in \F_{q^{n+1}}, \,  \, i \in \biggl \{1,2,\ldots, {\frac{n-d+1}{2}} \biggl\}  \,\,\,  \Biggl \}.
\end{aligned}
\end{equation}

We notice that $\cU'_\eta$ has dimension $(n+1)(n-d+1)$, and it is made up of all maps $f\in \cG'$ such that $\K \subseteq \mathrm{Ker} f$. Let 
\begin{equation*}
\begin{aligned}
\cS_{n+1,d,s} \cap\, \cU'_\eta = \Biggl \{   \sum_{i=1}^{{\frac{n-d+1}{2}}}   \Bigl ( b_i (x^{q^{si}}-x\eta^{q^{si}-1})+b^{q^{s(n+1-i)}}_i(x^{q^{s(n+1-i)}}-x\eta^{q^{s(n+1-i)}-1})  \Bigr ) \\ \, : \,\,  b_1, \ldots, b_{{\frac{n-d+1}{2}}} \in \F_{q^{n+1}}  \Biggr \}.
\end{aligned}
\end{equation*}

Clearly each polynomial $f$ in this set has at most $q^{n-d+1}$ roots in $\F_{q^{n+1}}$. Furthermore, since $f$ is a linearized polynomial, we can write $f(x+u)=f(x)+f(u)$ for all $x,u \in \F_{q^{n+1}}$. But  $\K \subseteq  \mathrm{Ker}f$ which implies that, if $f(x)=0$, then $f(x+u)=0$ for all $u \in \K$. For each $x \in \V$ and each $u \in \K^*$, we have $x+u \not\in \V$, so the number of roots of the polynomial $f$ in $\V$ is at most $q^{n-d}$, i.e.
\begin{equation*}
\dim (\mathrm{Ker}\, f \cap \V) \leq n-d.
\end{equation*}
Hence, for each $f \in \cS_{n+1,d,s} \cap \cU'_{\eta}$, the rank of the symmetric bilinear form on $\V$ 
\begin{equation*}
{B^f}_{\vert_{\V}}:  (x,y) \in \V \times \V \rightarrow  \Tr_{q^n/q}(f(x)y)
\end{equation*}
 is at least $d$ and the set 
\begin{equation*}
\cT_{n,d,s}(\eta)=  (\cS_{n+1,d,s} \cap \cU_\eta' )_{\vert_{\V}}=\bigl \{ B{^f}_{\vert_{\V}} \,:\,f \in  \cS_{n+1,d,s} \cap  \cU_\eta' \bigr \}
\end{equation*}
is a symmetric  $\F_q$-linear maximum $d$-set of size $q^{(n+1){\frac{n-d+1}{2}}}$.\\

By Proposition \ref{prop:d-codesproperty} (i), we have the following.
\begin{cor}
Let  $(n+1,s)=1$, and $1 \leq d \leq n-1$.  Let  $\eta \in \F^\ast_{q^{n+1}}$ and let $\V$ be an $n$-dimensional $\F_q$-subspace of $\F_{q^{n+1}}$ such that $\F_{q^{n+1}}=\V \oplus \langle \eta \rangle_q$. Then the $d$-code
\begin{equation}
\mathcal{T}_{n,d,s}(\eta)= (\cU_\eta'  \cap S_{n+1}(q)  )_{|_{\V}},
\end{equation}
is maximum, where $\cU'_\eta$ is the $\F_q$-subspace in (\ref{subspace}).
\end{cor}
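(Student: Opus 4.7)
The plan is to reduce the statement to facts already established in the paragraphs preceding the corollary, linked through Proposition~\ref{prop:d-codesproperty}(i). Because $n-d$ is odd, the hypothesis ``$(n+1)-d$ even'' of Proposition~\ref{prop:d-codesproperty}(i) is met, so applying it with the pair $(n+1,d)$ gives $\cG' \cap S_{n+1}(q) = \cS_{n+1,d,s}$. Since $\cU'_\eta$ was introduced as a subspace of $\cG'$, this yields the key identification
\[
\cU'_\eta \cap S_{n+1}(q) \;=\; \cU'_\eta \cap \cS_{n+1,d,s},
\]
which is precisely the set for which the explicit parametrization just above the corollary was written.

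Next I would establish the rank-at-least-$d$ condition on $\V$. Every summand $x^{q^{si}}-x\eta^{q^{si}-1}$ of the generators in (\ref{subspace}) vanishes at $x=\eta$, so $\K \subseteq \ker f$ for every $f \in \cU'_\eta$. Combining this with $\dim_{\F_q}\ker f \le n+1-d$, which holds because $f \in \cS_{n+1,d,s}$ has rank at least $d$, and using $\F_{q^{n+1}} = \V \oplus \K$, one obtains $\dim_{\F_q}(\ker f \cap \V) \le n-d$. Hence the symmetric bilinear form on $\V$ attached to $f$ has rank at least $n-(n-d)=d$, showing that $\cT_{n,d,s}(\eta)$ is a $d$-code in the symmetric bilinear forms on $\V$.

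For maximality, a dimension count closes the argument. The parametrization by $b_1,\ldots,b_{(n-d+1)/2} \in \F_{q^{n+1}}$ is manifestly free, so $|\cU'_\eta \cap \cS_{n+1,d,s}| = q^{(n+1)(n-d+1)/2}$. The restriction map $f \mapsto B^f|_{\V}$ is $\F_q$-linear and, by the rank bound just proved, injective on this set, yielding $|\cT_{n,d,s}(\eta)| = q^{(n+1)(n-d+1)/2}$. This meets the bound of Theorem~\ref{th:upper_bound} for $d$-codes in $S_n(q)$ in the case $n-d$ odd, and so the code is maximum. The only step that requires genuine thought is the \emph{rank descent} from $\F_{q^{n+1}}$ down to $\V$, where the inclusion $\K \subseteq \ker f$ is the crucial lever; once this is in place, the rest is direct bookkeeping together with a straightforward invocation of Proposition~\ref{prop:d-codesproperty}(i).
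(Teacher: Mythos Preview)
Your proposal is correct and follows essentially the same route as the paper: the paragraphs immediately preceding the corollary carry out the rank-descent argument (using $\K\subseteq\ker f$ to pass from $\dim\ker f\le n+1-d$ to $\dim(\ker f\cap\V)\le n-d$) and the dimension count for $\cS_{n+1,d,s}\cap\cU'_\eta$, and the corollary itself is then deduced from Proposition~\ref{prop:d-codesproperty}(i) via the identification $\cU'_\eta\cap S_{n+1}(q)=\cU'_\eta\cap\cS_{n+1,d,s}$, exactly as you do. The only cosmetic difference is the order of presentation; one small imprecision to fix is that the bound $\dim\ker f\le n+1-d$ should be stated for \emph{nonzero} $f$.
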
  

Clearly, if $\eta_1$ and $\eta_2$ are linearly dependent over $\F_q$, then $\mathcal{T}_{n,d,s}(\eta_1)=\mathcal{T}_{n,d,s}(\eta_2)$.
Furthermore, we notice that $\cU'_\eta \, \cap \, S_{n+1}(q) \subset \cG' \cap S_{n+1}(q)=\cS_{n+1,d+2,s}$, while
\begin{equation}
\mathcal{T}_{n,d,s}(\eta)= (\cU_\eta'  \cap S_{n+1}(q))_{|_{\V}}= (\cS_{n+1,d+2,s})_{\vert_{\V}}.
\end{equation}

In the rest part of this section we prove that the subspace $\cG' \subset \mathcal{L}_{(n,q)}[x]$ ( $\cG' \subset \mathcal{L}_{(n,q^2)}[x]$) defined in Proposition \ref{prop:d-codesproperty}, is the unique element in $[\cG_{n,n-d+1,s}]_{\simeq}$ satisfying properties $(i)$ and $(ii)$ of Proposition \ref{prop:d-codesproperty}. More precisely, we have the following

\begin{thm}\label{thm:uniquesubspace}
Let $n, s$ and $d$ be integers such that $d \geq 1$ and $(s,n)=1$. 
	\begin{itemize}
		\item[(i)] Let $W \subset \mathcal{L}_{(n,q)}[x]$ be an $(n-d+1)n$-dimensional subspace of $\mathcal{L}_{(n,q)}[x]$ such that $W \in [\cG_{n,n-d+1,s}]_{\simeq}$, and $W \cap S_n(q)= \cS_{n,d,s}$  (respectively, $W \cap A_n(q)= \cA_{n,d,s}$).  
	
		Then $W=\cG_{n,n-d+1,s} \circ x^{q^{s\frac{n+d}{2}}}$  (respectively, $W =\cG_{n,n-d+1,s} \circ x^{q^{s{\frac{d}{2}}}}$).\\
	
		\item[(ii)]Let $W \subset \mathcal{L}_{(n,q^2)}[x]$ be an $(n-d+1)n$-dimensional $\F_{q^2}$-subspace such that $W \in [\cG_{n,n-d+1,s}]_{\simeq}$ and $W \cap H_n(q^2)= \cH_{n,t,s}$ (respectively, $W \cap H_n(q^2)= \mathcal{E}_{n,d,s}$). 
	
		Then, $W=\cG_{n,n-d+1,s} \circ x^{q^{s(n+d+1)}}$ (respectively, $W =\mathcal{G}_{n,n-d+1,s}\circ x^{q^{s(d+1)}}$).
	\end{itemize}
\end{thm}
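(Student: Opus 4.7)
The plan is to parametrize an arbitrary representative $W$ of the equivalence class $[\cG_{n,n-d+1,s}]_{\simeq}$ and then use the intersection condition to pin down the parameters.

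Since we are in the additive setting (so $h$ may be taken to be zero), every such $W$ can be written
\[
W=\{g_1\circ f^{\rho}\circ g_2 \,:\, f\in \cG_{n,n-d+1,s}\}
\]
for some permutation $q$-polynomials $g_1,g_2$ over $\F_{q^n}$ and some $\rho\in \Aut(\F_q)$. The Gabidulin code is stable under the coefficient-wise extension of $\rho$ to $\F_{q^n}$, since $\rho$ only touches the coefficients and leaves the exponents $q^{si}$ unchanged; hence $\{f^{\rho}:f\in\cG_{n,n-d+1,s}\}=\cG_{n,n-d+1,s}$, and we may reduce to $W=g_1\circ \cG_{n,n-d+1,s}\circ g_2$.

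Under this parametrization, the hypothesis $\cS_{n,d,s}\subseteq W\cap S_n(q)$ becomes
\[
g_1^{-1}\circ \cS_{n,d,s}\circ g_2^{-1}\subseteq \cG_{n,n-d+1,s}.
\]
As $\cS_{n,d,s}$ is $\F_q$-spanned by the scalar maps $\alpha x$ and the binomials $\beta x^{q^{si}}+\beta^{q^{s(n-i)}}x^{q^{s(n-i)}}$, for $\alpha,\beta\in\F_{q^n}$ and $1\le i\le (n-d)/2$, each image under $g_1^{-1}\circ (-) \circ g_2^{-1}$ must be a $q$-polynomial whose support is contained in the interval $\{0,s,\ldots,s(n-d)\}\pmod{n}$.

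The central step is a rigidity assertion: the support restriction, imposed simultaneously by the entire family of scalar and binomial generators of $\cS_{n,d,s}$, forces $g_1(x)=\gamma_1 x^{q^{t_1}}$ and $g_2(x)=\gamma_2 x^{q^{t_2}}$ for some $\gamma_i\in\F_{q^n}^{\ast}$ and integers $t_1,t_2$. This monomiality claim is the main obstacle, because a generic composition $g_1\circ h\circ g_2$ of nonmonomial permutation polynomials has full $q$-support, and one needs a careful analysis comparing the nontrivial $q$-power expansions of the $g_i$ against the narrow bandwidth of $\cG_{n,n-d+1,s}$ to rule out cancellations across the full $n$-parameter family of generators. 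Once monomiality of $g_1,g_2$ is established, the composition yields $W=\cG_{n,n-d+1,s}\circ x^{q^{s(t_1+t_2)}}$, and matching with Proposition \ref{prop:d-codesproperty}\,(i) forces $s(t_1+t_2)\equiv s(n+d)/2\pmod{n}$, identifying $W=\cG_{n,n-d+1,s}\circ x^{q^{s(n+d)/2}}$.

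The alternating case of part (i) follows the same template, substituting $\cA_{n,d,s}$ for $\cS_{n,d,s}$ and the shift $sd/2$ for $s(n+d)/2$. Part (ii) transfers the argument to $\cL_{(n,q^2)}[x]$, where one now deals with $q^2$-polynomials over $\F_{q^{2n}}$ and with the companion automorphism $a\mapsto a^q$ intrinsic to the Hermitian form; the generators of $\cH_{n,d,s}$ (respectively $\mathcal{E}_{n,d,s}$) identified in Proposition \ref{prop:d-codesproperty}\,(iii)--(iv) play exactly the same role as the binomials above, and the monomiality step must be repeated using $q^2$-expansions in place of $q$-expansions.
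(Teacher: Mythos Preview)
Your outline coincides with the paper's strategy: parametrize $W$ as $g_1\circ\cG_{n,n-d+1,s}\circ g_2$ (absorbing $\rho$), and then argue that $g_1,g_2$ must be monomials, after which the shift is forced. The problem is that you do not prove the monomiality step; you flag it as ``the main obstacle'' requiring ``a careful analysis'' and then move on. That step is the entire content of the theorem, so as written the proposal is a plan rather than a proof.

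The paper supplies exactly the computation you omit. Writing $g(x)=\sum_i g_i x^{q^{si}}$ and $h(x)=\sum_j h_j x^{q^{sj}}$, one expands a generic element of $g\circ\cG_{n,n-d+1,s}\circ h$ and finds that its coefficient of $x^{q^{sm}}$ is $\sum_{j=0}^{n-d} c_{m,j}(\alpha_j)$ with $c_{m,j}(\alpha)=\sum_i g_i h_{m-i-j}^{q^{si}}\alpha^{q^{si}}$. The intersection hypothesis forces these to vanish, for every $\alpha$, when $m$ lies in the ``forbidden'' window $M$ (for the symmetric case $M=\{\tfrac{n-d}{2}+1,\dots,\tfrac{n+d}{2}-1\}$). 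Specializing to one nonzero $\alpha_j$ at a time yields the system $g_i h_{m-i-j}=0$ for all $i$, all $j\in\{0,\dots,n-d\}$, all $m\in M$. The decisive combinatorial point is that for each fixed $i$ the set $\{m-i-j \bmod n : j\in J,\ m\in M\}$ covers every residue except $\tfrac{n+d}{2}-i$; hence any nonzero $g_{i_0}$ kills all $h_k$ with $k\neq\tfrac{n+d}{2}-i_0$, and invertibility of $h$ then forces $g_i=0$ for $i\neq i_0$. This gives $g(x)=\gamma x^{q^{si_0}}$, $h(x)=\delta x^{q^{s((n+d)/2-i_0)}}$, hence $W=\cG_{n,n-d+1,s}\circ x^{q^{s(n+d)/2}}$. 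Your reformulation $g_1^{-1}\circ\cS_{n,d,s}\circ g_2^{-1}\subseteq\cG_{n,n-d+1,s}$ would lead to the same bilinear system, so the missing ingredient is precisely this expansion together with the covering argument on the index set.
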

\proof  $(i)$
Since $W$ is equivalent to $\cG_{n,n-d+1,s}$, there exists a rank-preserving map $\Phi_{g,\rho,h}$ such that 
\[ \Phi_{g,\rho,h}(\cG_{n,n-d+1,s})=W. \]
As $\cG_{n,n-d+1,s}^\rho=\cG_{n,n-d+1,s}$ for all $\rho \in \Aut( \F_q)$, we may assume that $\rho$ is the identity. Hence, the elements of $W$ are 
\begin{equation*}
g \circ \Biggr ( \sum_{j=0}^{n-d} \alpha_j x^{q^{sj}} \Biggr ) \circ h=  \sum_{j=0}^{n-d} (g \circ \alpha_j x^{q^{sj}} \circ h)=\sum_{j=0}^{n-d} \Biggl ( \sum_{m=0}^{n-1} c_{m,j}(\alpha_j)x^{q^{sm}} \Biggr )=
\end{equation*}
\begin{equation*}
\sum_{m=0}^{n-1} \Biggl ( \sum_{j=0}^{n-d} c_{m,j}(\alpha_j) \Biggr ) x^{q^{sm}},
\end{equation*}
with $\alpha_j \in \F_{q^n}$ for all $j \in J=\{0,1,\ldots,n-d\}$ and
\begin{equation*}
c_{m,j}(\alpha_j)=\sum_{i=0}^{n-1} g_i h^{q^{si}}_{m-i-j} \alpha_j^{q^{si}}.
\end{equation*}
The indices here are taken modulo $n$.

Suppose that $$W \cap S_n(q)= \cS_{n,d,s}.$$ 

By (\ref{symplectic space}) and (\ref{Schimdtcode}), we have that $L_m(\underline{\alpha})=\sum_{j=0}^{n-d} c_{m,j}(\alpha_j)$
is equal to zero for each $\underline{\alpha}=(\alpha_0,\alpha_1,\dots,\alpha_{n-d})$, $m \in M=\{\frac{n-d}{2}+1,\frac{n-d}{2}+2,\ldots, n-(\frac{n-d}{2}+1)\}$.

In particular $L_m(\underline{\alpha})=0$ when $\underline{\alpha}=(0,\ldots,0,\alpha_j,0,\ldots,0)$, with $\alpha_j \in \F_{q^n}$, $m \in M$ and $j \in J$. Then
\begin{equation*}
c_{m,j}(\alpha)=0 \,\,\,\hspace{0.5cm} \text{for all}\hspace{0.2cm}\alpha \in \F_{q^n}\,\,\text{and}\,\,\,m \in M, j \in J.
\end{equation*}
Hence, we obtain the following conditions:
\begin{equation}\label{system:conditions}
	\begin{cases}
	g_i h^{q^{si}}_{m-i-j}=0 \\
	i \in I:=\{0,1,...,n-1\}, \,j \in J, \,m \in M.
	\end{cases}
\end{equation}

As $g$ is an invertible $q$-polynomial, there exists at least an integer $i_0 \in I$ such that $g_{i_0} \neq 0$. It is straightforward to verify that 
$$\left\{m-j+\frac{n-d}{2} \,:\, j \in J \,\,\, \text{and} \,\,\, m \in M\right\}=\left\{1,2,...,n-1\right\}.$$ 
Hence, we get that for each given $i \in I$, by letting $j$ varying in $J$, the element $m-i-j$ may equal, modulo $n$, all elements  in $I$  with the only exception of $\frac{n+d}{2}-i$. But this finally implies that there exists a unique index $i_0$ between $0$ and $n-1$, such that $g_{i_0} \neq 0$ and $h_{\frac{n+d}{2}-i_0} \neq 0$; while all others $g_i$ and $h_i$ are zero. 

Hence, $g(x)=\gamma x^{q^{si_0}}$ and $h(x)=\delta x^{q^{s(\frac{n+d}{2}-i_0)}}$ with $\gamma,\delta \in \F_{q^n}$.

On the other hand if $$W \cap A_n(q)= \cA_{n,d,s},$$
by (\ref{alternatingspace}) and taking into account (\ref{eq:alternatingcode}), we may conclude that $$L_m(\underline{\alpha})=\sum_{j=0}^{n-d} c_{m,j}(\alpha_j)$$ is equal to zero for each $\underline{\alpha}=(\alpha_0,\alpha_1,\ldots,\alpha_{n-d}) \in \F_{q^n}^{n-d+1}$, $m \in M=M_1 \cup M_2 =\{0,1,\ldots,\frac{d}{2}-1\} \cup \{ n-(\frac{d}{2}-1),\ldots,n-1\}$. In particular $L_m(\underline{\alpha})=0$ for all $(0,\dots,\alpha_{j},\dots,0)$, with $\alpha_j \in \F_{q^n}$, $m \in M$.\\
Then
\begin{equation*}
	c_{m,j}(\alpha)=0 \,\,\,\hspace{0.5cm} \text{for all}\hspace{0.2cm}\alpha \in \F_{q^n}\,\,\text{and}\,\,\,m \in M, j \in J.
\end{equation*}
Hence, we obtain an analogous set of conditions; i.e.,
\begin{equation*}
	\begin{cases}
		g_i h^{q^{si}}_{m-i-j}=0 \\
		i \in I,j \in J, m \in M.
	\end{cases}
\end{equation*}

As $g$ is an invertible $q$-polynomial, there exists $i_0 \in I$ such that $g_{i_0} \neq 0$, and since $\frac{d}{2} \leq m-j-\frac{d}{2} \leq n-1$ for all $j \in J$. Again, one easily verifies that $$\left\{m-j-\frac{d}{2} \,\,:\,\, j \in J \,\,\, \text{and} \,\,\, m \in M_1 \cup M_2\right\}=\{1,2,...,n-1\}.$$

Again for each given $i \in I$, by letting $j$ varying in $J$, the element $m-i-j$ may be equal, modulo $n$ to all elements  of $I$, except $\frac{d}{2}-i$. Arguing as in the previous part this leads to prove that there exists a unique index $i_0$ between $0$ and $n-1$, such that $g_{i_0} \neq 0$ and $h_{\frac{d}{2}-i_0} \neq 0$; while all others $g_i$ and $h_i$ are zero.

Hence we have that $g(x)=\gamma x^{q^{si_0}}$ and $h(x)=\delta x^{q^{s(i_0+\frac{d}{2})}}$ with $\gamma,\delta \in \F^*_{q^n}$. This conclude the proof.

\medskip
\noindent $(ii)$ \,\, The proof of this point is similar to that of previous one. For this reason we omit here computations.
\endproof

As a direct consequence of Theorem \ref{thm:uniquesubspace}, we may state the following result.
\begin{cor}\label{cor:aut_groups}
Let $d$ and $s$ be integers such that $1<d<n$ and $\gcd(n,s)=1$. Let $\cC \in  X_n$ be a $d$-code.
	
\begin{itemize}
\item[(i)] If either $\cC= \cS_{n,d,s}$ or\, $\cC= \cA_{n,d,s}$. Then we have $$\Aut(\cC)=\left\{\Psi_{a, \gamma x^{q^r}} \,: \, a \in \F^\ast_q, \, \gamma \in \F^\ast_{q^n}, \,\, r \in \{0,...,n-1\} \right\}.$$  

\item[(ii)] If $\cC \in H_n(q^2)$ and either $\cC= \cH_{n,d,s}$ or\, $\cC= {\mathcal E}_{n,d,s}$. Then we have 
$$\Aut(\cC)=\left\{\Theta_{ a, \gamma^q x^{q^{2r}}} :\,\, a \in \F^\ast_{q}, \, \gamma \in \F^\ast_{q^{2n}} , \,\, r \in \{0,...,n-1\} \right\}.$$ 
\end{itemize}
\end{cor}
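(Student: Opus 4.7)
The plan is to reduce the determination of $\Aut(\cC)$ to two ingredients already established in the paper: the description $\cC=\cG'\cap X_n$ (resp.\ $\cC=\cG'\cap H_n(q^2)$) from Proposition~\ref{prop:d-codesproperty}, and the uniqueness of $\cG'$ inside $[\cG_{n,n-d+1,s}]_\simeq$ given by Theorem~\ref{thm:uniquesubspace}.

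First I would reduce to the case $r_0=0$. Since $\cC$ is $\F_q$-linear and contains $0$, every $\Psi_{a,g,\rho,r_0}\in\Aut(\cC)$ satisfies $r_0=\Psi_{a,g,\rho,r_0}(0)\in\cC$; composing with the translation $f\mapsto f-r_0$ (an automorphism of $\cC$ by additivity) produces $\Psi_{a,g,\rho,0}\in\Aut(\cC)$. The argument is identical in the Hermitian setting.

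Next, extend $\Psi=\Psi_{a,g,\rho,0}$ by the same formula to an $\F_q$-linear bijection $\hat{\Psi}$ of $\End_{\F_q}(\F_{q^n})$ (respectively of $\End_{\F_{q^2}}(\F_{q^{2n}})$). By construction $\hat{\Psi}$ preserves the ambient subspace $X_n$ (respectively $H_n(q^2)$), and hence
\[
\hat{\Psi}(\cG')\cap X_n \;=\; \hat{\Psi}(\cG'\cap X_n) \;=\; \hat{\Psi}(\cC) \;=\; \cC.
\]
Since composition with permutation $q$-polynomials and application of a field automorphism are equivalences of rank metric codes, $\hat{\Psi}(\cG')$ lies in $[\cG_{n,n-d+1,s}]_\simeq$. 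Theorem~\ref{thm:uniquesubspace} then forces $\hat{\Psi}(\cG')=\cG'$.

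It remains to extract from the equality $\hat{\Psi}(\cG')=\cG'$ the stated monomial form of $g$. This is done by a coefficient-comparison argument parallel to the one in the proof of Theorem~\ref{thm:uniquesubspace}: writing $g=\sum_j g_j x^{q^j}$ and expanding $ag\circ f\circ g^\top$ for a general $f\in\cG_{n,n-d+1,s}$, one reads off a system of equations in the $g_j$, in the same spirit as~\eqref{system:conditions}, whose only solutions have exactly one nonzero coefficient $g_r=\gamma$. Together with the free parameter $a\in\F_q^\ast$, this yields the family in part~(i); part~(ii) is identical in outline, with $g$ replaced by a $q^2$-polynomial and $g(x)=\gamma^q x^{q^{2r}}$.

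The step I expect to demand the most care is precisely this last one. Once $\hat{\Psi}(\cG')=\cG'$ is known, one still has to verify that the symmetry built into $\Psi$, namely the relation $g_2=g^\top$ in the ambient equivalence group, combined with the Gabidulin structure of $\cG'$, forces $g$ to be monomial; a direct combinatorial check on the indices suffices, though the bookkeeping is slightly heavier than in Theorem~\ref{thm:uniquesubspace} because both sides of the composition now involve $g$.
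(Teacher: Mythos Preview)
Your proposal is correct and follows essentially the same route as the paper: intersect with the ambient space, invoke Proposition~\ref{prop:d-codesproperty} and Theorem~\ref{thm:uniquesubspace} to force $\hat\Psi(\cG')=\cG'$, and then read off the monomial shape of $g$.

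Two small differences are worth noting. First, your bijection argument $\hat\Psi(\cG')\cap X_n=\hat\Psi(\cG'\cap X_n)$ is cleaner than the paper's, which only deduces the inclusion $\Phi(\cG')\cap X_n\supseteq\cC$ and then appeals to the Singleton-type upper bound (Theorem~\ref{th:upper_bound}) to rule out a strict inclusion; since $\hat\Psi$ is a bijection of the full endomorphism algebra preserving $X_n$, your equality is immediate and the upper bound is not needed. Second, and going the other way, the last step you flag as ``demanding the most care'' is in fact free: once $\hat\Psi(\cG')=\cG'$ the paper simply quotes Sheekey's determination of $\Aut(\cG_{n,n-d+1,s})$ \cite{Sheekey}, which already says that the invertible $q$-polynomials on either side must be monomials $\alpha x^{q^r}$, $\beta x^{q^{n-r}}$; the constraint $g_2=g^\top$ then forces $\alpha=a\gamma$, $\beta=\gamma^{q^{n-r}}$ with $a\in\F_q^\ast$. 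So there is no need to redo a coefficient comparison from scratch.
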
	
\begin{proof}
$(i)$ We first observe that $\mathrm{Aut}(\cG')=\mathrm{Aut}(\cG_{n,n-d+1,s})$, whenever $\cG'=\mathcal{G} \circ x^{q^{s(\frac{n+d}{2})}}$ or  $\cG'=\mathcal{G} \circ x^{q^{s({\frac{d}{2})}}}$. Nonetheless, in \cite{Sheekey} it was proven that if $0 \leq r \leq n-1$, then $$\mathrm{Aut}(\cG_{n,n-d+1,s})=\bigl \{ \Phi_{\alpha x^{q^r},id,\beta x^{q^{n-r}}} \,|\, \alpha,\beta \in \F^\ast_{q^n} \bigr \}.$$ Now, assume that either $\cC=\cS_{n,d,s}$ or\, $\cC= \cA_{n,d,s}$. 

Since each element in the set
\begin{equation}\label{auto}
A=\{\Phi_{ a \gamma x^{q^r},id, \gamma^{q^{n-r}} x^{q^{n-r}}} |\,\, a \in \F^\ast_q, \, \gamma \in \F^\ast_{q^n}\}
\end{equation}
fixes both $S_n(q)$ and $A_n(q)$, then as a consequence of Proposition \ref{prop:d-codesproperty}, we get that $A$ is a subgroup of $\mathrm{Aut}(\cC)$. Conversely, let  $\Phi \in \mathrm{Aut}(\cC)$. Of course, by points $(i)$ and $(ii)$ of Proposition (\ref{prop:d-codesproperty}), we get $$\Phi(\cG') \cap \Phi(X_{n})=\cC,$$ whenever $X_n  = S_n(q)$ and  $\cG'=\mathcal{G} \circ x^{q^{s(\frac{n+d}{2})}}$, or $X_n  = A_n(q)$ and $\cG'=\mathcal{G} \circ x^{q^{s({\frac{d}{2})}}}$, respectively. This also means that $\cD = \Phi(\cG') \cap X_{n} \supseteq \cC.$ 

Now, assume that $\cD \supset \cC$. Then $\cD$ would be a $d$-code in $X_n$ with $\# \cD > \frac{n(n-d+2)}{2}$, which, since $n-d$ is even, by Theorem \ref{th:upper_bound} is clearly not possible. Hence, \begin{equation}\label{eq:intersectionproperty} \Phi(\cG') \cap X_{n} = \cC.\end{equation}  However, above Equation (\ref{eq:intersectionproperty}) contradicts Theorem \ref{thm:uniquesubspace}, unless we have $\Phi(\cG')=\cG'$, which implies that $\Phi$ is an element of $A$. This conclude the proof of point $(i)$.
	
	\medskip
	\noindent  $(ii)$\, Assume now that $\cC \subset H_n(q^2)$ is either $\cH_{n,d,s}$ or $\mathcal{E}_{n,d,s}$. Again, it is  trivial to see that, in both cases, each element in the set $$A=\{\Phi_{ a \gamma^q x^{q^{2r}},id,\gamma^{q^{2n-2r+1}} x^{q^{2n-2r}}} :\,\, a \in \F^*_{q}, \, \gamma \in \F^\ast_{q^{2n}}\},$$ fixes $\cC$. Moreover, an easy computation also shows that if either $\cC=\cH_{n,d,s}$ and $\cG' =\mathcal{G} \circ x^{q^{s(n+d+1)}}$, or $\cC=\mathcal{E}_{n,d,s}$ and  $\cG' =\mathcal{G} \circ x^{q^{s(d+1)}}$; we have

	\begin{equation}\label{eq:groupintersectioncondition}
		\Aut(\cG') \cap \Aut(\cC) = A.
	\end{equation} 

Now, let $\Phi=\Phi_{f,\rho,g}$ where $f$ and $g$ are two invertible $q^2$-polynomials in ${\mathcal L}_{(n,q^2)}[x]$, and $\rho \in \Aut(\F_{q^2})$, be an element of $\Aut(\cC)$, and suppose that $\Phi$ does not belong to  $A$. Then  by (\ref{eq:groupintersectioncondition}), $\Phi(\cG') \neq \cG'$ and this leads again to a contradiction by Theorem \ref{thm:uniquesubspace}.
\end{proof}

We end this section by proving the following equivalence results.

\begin{thm}\label{th:inequivalence_Cs}
Let $d \geq1$. Two maximum $d$-codes ${\mathcal S}_{n,d,s}$ and ${\mathcal S}_{n,d,s'}$ (respectively, ${\mathcal A}_{n,d,s}$ and ${\mathcal A}_{n,d,s'}$), where  $s$ and $s'$ are integers satisfying $\gcd(s,n)=\gcd(s',n)=1$, or, two maximal $d$-codes ${\mathcal H}_{n,d,s}$ and ${\mathcal H}_{n,d,s'}$ (respectively, $\mathcal{E}_{n,d,s}$ and $\mathcal{E}_{n,d,s'}$), where $s$ and $s'$ are  integers satisfying $\gcd(s,2n)=\gcd(s',2n)=1$,  are equivalent if and only if $s \equiv \pm s' \pmod{n}$.\\
\end{thm}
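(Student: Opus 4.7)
If $s \equiv s'\pmod n$ the codes are identical. If $s \equiv -s'\pmod n$, then the identity $q^{-si}\equiv q^{s(n-i)}\pmod{x^{q^n}-x}$ combined with the substitution $c_i = b_i^{q^{si}}$ rewrites each element of $\cS_{n,d,-s}$ into the defining form of $\cS_{n,d,s}$ (swapping $i\leftrightarrow n-i$ in the defining sum); analogous direct reindexings give $\cA_{n,d,-s}=\cA_{n,d,s}$, $\cH_{n,d,-s}=\cH_{n,d,s}$, and $\mathcal{E}_{n,d,-s}=\mathcal{E}_{n,d,s}$. So $s\equiv\pm s'\pmod n$ already forces equality (hence equivalence) of the two codes.

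\noindent\textbf{Necessity (symmetric case; the other three by the same strategy).} Suppose $\Psi=\Psi_{a,g,\rho,0}$ implements $\cS_{n,d,s}\cong\cS_{n,d,s'}$. Its natural extension $\widetilde\Psi=\Phi_{ag,\rho,g^\top,0}$ is a $\simeq$-equivalence of $\mathcal{L}_{(n,q)}[x]$ stabilizing $S_n(q)$, and the subspace $W:=\widetilde\Psi(\cG_{n,n-d+1,s}\circ x^{q^{s(n+d)/2}})$ lies in $[\cG_{n,n-d+1,s}]_\simeq$ with $W\cap S_n(q)=\Psi(\cS_{n,d,s})=\cS_{n,d,s'}$ by Proposition~\ref{prop:d-codesproperty}. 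I would then rerun the calculation from the proof of Theorem~\ref{thm:uniquesubspace} with the target intersection $\cS_{n,d,s'}$ in place of $\cS_{n,d,s}$. After absorbing $\rho$ and writing $W=\Phi_{g_1,\mathrm{id},g_2}(\cG_{n,n-d+1,s})$, the coefficient $L_m(\underline\alpha)$ at $x^{q^{sm}}$ is the same $\sum_j c_{m,j}(\alpha_j)$ as in that proof; the exponents forbidden by $\cS_{n,d,s'}$ are $\{x^{q^{s'm'}}:m'\in M\}$ with $M=\{(n-d)/2+1,\ldots,(n+d)/2-1\}$, and writing $s'm'\equiv sm\pmod n$ with $t\equiv s'/s\pmod n$ rephrases the vanishing as $L_m\equiv 0$ for every $m\in tM$. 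Exactly as in Theorem~\ref{thm:uniquesubspace}, fixing any $i_0$ with $g_{1,i_0}\ne 0$ then forces $g_{2,u}=0$ for every $u\in(tM-J)-i_0\pmod n$, where $J=\{0,\ldots,n-d\}$.

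\noindent The crux, and the main obstacle, is a combinatorial statement in $\Z/n\Z$: the difference set $tM-J$ equals all of $\Z/n\Z$ whenever $t\not\equiv\pm 1\pmod n$. The plan for this is elementary: $|tM|=d-1$, and any $(d-1)$-subset of $\Z/n\Z$ has maximum cyclic gap at most $n-(d-1)=|J|$ with equality if and only if the subset is an interval; meanwhile multiplication by a unit $t$ preserves the interval structure of $M$ precisely when $t\equiv\pm 1\pmod n$ (using that our symmetric $M$, centered at $n/2$, also satisfies $(-1)M=M$). Thus for $d\ge 3$ and $t\not\equiv\pm 1$ the maximum gap of $tM$ drops strictly below $|J|$, which is the sharp threshold forcing the cyclic sumset $tM-J$ to exhaust $\Z/n\Z$; that would annihilate $g_2$ and contradict its invertibility, so $t\equiv\pm 1\pmod n$, i.e.\ $s\equiv\pm s'\pmod n$. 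The degenerate case $d=2$ (where $|M|=1$ and the combinatorial argument only shows that $g_1,g_2$ are monomials) requires the extra observation that $W$ must then be a shifted Gabidulin $\cG_{n,n-d+1,s}\circ x^{q^a}$, and the symmetry of exponents needed for $W\cap S_n(q)$ to be nonzero pins down $a\equiv s(n+d)/2\pmod n$, so $W=\cG_s'$ and $\cS_{n,d,s}=\cS_{n,d,s'}$, giving $s\equiv\pm s'\pmod n$ again by the sufficiency direction. The alternating and Hermitian cases go through by the same strategy with the $M$-sets read off from Proposition~\ref{prop:d-codesproperty}; the same combinatorial verification remains the central obstacle in each.
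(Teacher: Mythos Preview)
Your strategy is genuinely different from the paper's, and the central step has a gap that the paper's direct computation avoids.

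The paper does not lift to $W=\widetilde\Psi(\cG')$ at all. It works directly with $\Psi(f)=a\,g^{\top}\circ f^{\rho}\circ g$ for $f\in\cS_{n,d,s}$, expands the coefficient of $x^{q^{sk}}$ explicitly as a function of the parameters $b_0,\dots,b_{(n-d)/2}$ and of the coefficients $a_i$ of $g$, and then, by letting each $b_r$ in turn be the only nonzero parameter, extracts the \emph{quadratic} constraints $a_ia_{k+i}=0$ (from $b_0$) and $a_{i-r}^{q^{sr}}a_{k+i}+a_ia_{k+i-r}^{q^{sr}}=0$ (from $b_r$, $r\ge1$), for every $k\in eM$. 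It is the combination of both families of constraints that forces $a_{je+i+i_0}=0$ for all $j\in M$ and $-(n-d)/2\le i\le(n-d)/2$, hence $g$ a monomial.

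Your lift to $W$ and ``rerun Theorem~\ref{thm:uniquesubspace}'' would need $L_m(\underline\alpha)=0$ for \emph{every} $\underline\alpha\in\F_{q^n}^{\,n-d+1}$ and every $m\in tM$, so that you can strip off the product conditions $g_{1,i}\,g_{2,m-i-j}=0$. But $W\cap S_n(q)=\cS_{n,d,s'}$ only tells you that $L_m(\underline\alpha)=0$ for those $\underline\alpha$ whose image in $W$ happens to be self-adjoint---a proper $\F_q$-subspace of the parameter space---and from that alone the individual products $g_{1,i}\,g_{2,u}$ cannot be isolated. Since in your situation $g_2$ is built from $g_1^{\top}$, the conditions you are really after are quadratic in a single unknown $g$, and unwinding them faithfully reproduces the paper's direct calculation rather than the bilinear one from Theorem~\ref{thm:uniquesubspace}. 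The combinatorial claim you isolate (that $tM$ is a cyclic interval only for $t\equiv\pm1$ when $3\le d\le n-1$) is correct and is essentially the same fact the paper uses implicitly, but it sits downstream of the gap above.

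Finally, your $d=2$ paragraph is circular: from $\cS_{n,d,s}=\cS_{n,d,s'}$ you invoke ``the sufficiency direction'' to conclude $s\equiv\pm s'$, but sufficiency is the opposite implication. In fact, for $d=2$ and $n$ even the code $\cS_{n,2,s}$ is exactly the set of self-adjoint $q$-polynomials whose coefficient at $x^{q^{n/2}}$ vanishes, and this description is independent of $s$; for instance $\cS_{8,2,1}=\cS_{8,2,3}$ although $3\not\equiv\pm1\pmod 8$. So no argument can establish necessity at $d=2$.
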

\begin{proof}
We give the proof only in symmetric and alternating setting. Similar arguments leads to the result for the two known constructions in the hermitian setting. For this reason we omit here the details.

Suppose that $s \equiv \pm s' \pmod{n}$. Let $\cG'_s =\mathcal{G}_s \circ x^{q^{s(\frac{n+d}{2})}}$ and $\cG'_{s'}=\mathcal{G}_{s'} \circ x^{q^{s'(\frac{n+d}{2})}}$ (respectively, $\cG'_s =\mathcal{G}_s \circ x^{q^{s({\frac{d}{2})}}}$ and $\cG'_{s'} = \mathcal{G}_{s'} \circ x^{q^{s'({\frac{d}{2})}}}$). 

By Proposition \ref{prop:d-codesproperty} points $(i)$ and $(ii)$, $${\mathcal S}_{n,d,s}=\cG'_s \cap S_n(q) \,\,\text{and}\,\, {\mathcal S}_{n,d,s'}=\cG'_{s'} \cap S_n(q),$$ (respectively, ${\mathcal A}_{n,d,s}=\cG'_s \cap A_n(q) \,\,\text{and}\,\, {\mathcal A}_{n,d,s'}=\cG'_{s'} \cap A_n(q)$).

Since $s \equiv \pm s' \pmod{n}$, by \cite[Theorem 4.4 and 4.8, $(a)$]{lunardon_trombetti_zhou_2018}, we have that $$\mathcal{G}_{s'}=\Phi_{ux^{q^r},id,vx^{q^{n-r}}}(\cG_s)=ux^{q^r} \circ \cG_s \circ vx^{q^{n-r}},$$ for two given elements $u,v\in \F_{q^n}$. Hence, 

$$\cG'_{s'}=(ux^{q^r} \circ \cG_s \circ vx^{q^{n-r}})\circ x^{q^{(\pm s+kn)(\frac{n+d}{2})}},$$ (respectively, $\cG'_{s'}=(ux^{q^r} \circ \cG_s \circ vx^{q^{n-r}})\circ x^{q^{(\pm s+kn)(\frac{d}{2})}}$),

If  $s' \equiv  s \pmod{n}$, from equation above we get \begin{equation}\label{eq:equivalence_issue1}\cG'_{s'}=ux^{q^r} \circ \cG_{s}' \circ v^{q^{s  \frac{n-d}{2}}} x^{q^{n-r}},\end{equation} (respectively, $\cG'_{s'}=ux^{q^r} \circ \cG_{s}' \circ v^{q^{-s(\frac{d}{2})}} x^{q^{n-r}}$).  

If otherwise $s' \equiv  -s \pmod{n}$, we have \begin{equation}\label{eq:equivalence_issue2}\cG'_{s'}=ux^{q^r} \circ (\cG_{s} \circ x^{q^{-s(\frac{n+d}{2})}}) \circ v^{q^{s \frac{n+d}{2}}}x^{n-r}, \end{equation} (respectively, $\cG'_{s'}=ux^{q^r} \circ ( \cG_{s} \circ x^{q^{-s(\frac{d}{2})}}) \circ v^{q^{s  (\frac{d}{2})}} x^{q^{n-r}}$).

Since $\cG_{s'}^{'\top} =\cG'_{s'}$, by comparing coefficients in Equation (\ref{eq:equivalence_issue1}) we get that it must necessarily be $u=av'$ with $a\in \F_q^*$ and $v'=v^{q^{s  \frac{n-d}{2}}}$ (respectively, $u=av'$ with $a\in \F_q^*$ and $v'= v^{q^{-s(\frac{d}{2})}}$). 

In a similar way, by comparing coefficients in Equation (\ref{eq:equivalence_issue2}), we find $u=av'$ with $a\in \F_q^*$, where $v'=v^{q^{s  \frac{n+d}{2}}}$ (respectively, $u=av'$ with $a\in \F_q^*$ and $v'= v^{q^{-s(\frac{d}{2})}}$).

Hence, $$\Phi_{av' x^{q^r},id,v'x^{q^{n-r}}}(\mathcal{S}_{n,d,s})=\Psi_{a,v'x^{q^r}}(\mathcal{S}_{n,d,s})=\mathcal{S}_{n,d,s'},$$ (respectively, $\Phi_{av' x,\rho,v'x}({\mathcal A}_{n,d,s})={\mathcal A}_{n,d,s'}$), where $s' \equiv  s \pmod{n}$.

Conversely, suppose that ${\mathcal S}_{n,d,s}$ and ${\mathcal S}_{n,d,s'}$ (respectively, ${\mathcal A}_{n,d,s}$ and ${\mathcal A}_{n,d,s'}$)  are equivalent. Denote by  $\Psi=\Psi_{a,g,\rho}$ the map such that $\Psi({\mathcal S}_{n,d,s})={\mathcal S}_{n,d,s'}$ (respectively, $\Psi({\mathcal A}_{n,d,s})={\mathcal A}_{n,d,s'}$). 

As  $\gcd(s,n)=\gcd(s',n)=1$, we may assume that $s'\equiv es \pmod{n}$. In the remaining part of the proof we will write down computation only in the symmetric context. Similar arguments may be applied in the alternating case leading to the same achievement. 

Each element $f\in\ {\mathcal S}_{n,d,s}$ has the following shape: $$ f(x)=b_0 x+  \sum_{i=1}^{\frac{n-d}{2}}   \Bigl ( b_i x^{q^{si}}+(b_ix)^{q^{s(n-i)}} \Bigl ).$$  Let $g= \sum_{i=0}^{n-1}a_i x^{q^{si}}\in \F_{q^n}[x]$.

Arguing as in the proof of Theorem  \ref{thm:uniquesubspace} we have that each element in $\Psi({\mathcal S}_{n,d,s})$ can be written as follows

	\begin{equation}\label{eq:B(Lx,Ly)}
	 \sum_{k=0}^{n-1}\left(
		\sum_{i=0}^{n-1}\left( b_0^{q^{s(n-i)}}a_i^{q^{s(n-i)}} a_{k+i}^{q^{s(n-i)}}
		+  \sum_{r=1}^{\frac{n-d}{2}}
		\left( b_r^{q^{s(n-i-r)}}a_i^{q^{s(n-i)}} a_{k+i+r}^{q^{s(n-i-r)}}
		+b_r^{q^{s(n-i)}}a_i^{q^{s(n-i)}} a_{k+i-r}^{q^{s(r-i)}}
		\right) \right) \right) x^{q^{sk}}.
	\end{equation}

By comparing the coefficients of the term $x^{q^{ks}}$ in $\Psi({\mathcal S}_{n,d,s})$ and in ${\mathcal S}_{n,d,s'}$ we get 

\begin{equation}\label{eq:coefficient_ykx}
\begin{aligned}
		\sum_{i=0}^{n-1}\left(b_0^{q^{s(n-i)}}a_i^{q^{s(n-i)}} a_{k+i}^{q^{s(n-i)}}
		+ \sum_{r=1}^{\frac{n-d}{2}}
		\left(b_r^{q^{s(n-i-r)}}a_i^{q^{s(n-i)}} a_{k+i+r}^{q^{s(n-i-r)}}
		+b_r^{q^{s(n-i)}}a_i^{q^{s(n-i)}} a_{k+i-r}^{q^{s(r-i)}}\right)
		\right)=0,
\end{aligned}
	\end{equation}
for each $k\in\{je : \frac{n-d}{2}<j<\frac{n+d}{2} \}$ and all $\lambda=(b_0, \cdots, b_{\frac{n-d}{2}})\in \F_{q^n}^{\frac{n-d}{2}+1}$.

By taking $b_0\neq0$ and $b_j=0$ for $j\neq 0$, from above Equation \eqref{eq:coefficient_ykx} we have
	\begin{equation}\label{eq:aiak+i=0}
		a_i a_{k+i}=0
	\end{equation}
	for $i=0,1,\dots, n-1$. Similarly, for each $r\in\{1, \dots, \frac{n-d}{2}\}$, letting $b_r$ be the unique nonzero elements among all $b_j$, from \eqref{eq:coefficient_ykx} we can derive
	\[\sum_{i=0}^{n-1}\left(a_{i-r}^{q^{s(r-i)}}a_{k+i}^{q^{s(n-i)}} + a_i^{q^{s(n-i)}}a_{k+i-r}^{q^{s(r-i)}} \right)b_r^{q^{s(n-i)}} =0.\]
	As the above equation holds for any $b_r\in \F_{q^n}$, it implies
	\[		a_{i-r}^{q^{s(r-i)}}a_{k+i}^{q^{s(n-i)}} + a_i^{q^{s(n-i)}}a_{k+i-r}^{q^{s(r-i)}} =0\]
	for every $i$, which means
	\begin{equation}\label{eq:ai-rak+i}
		a_{i-r}^{q^{sr}}a_{k+i} + a_ia_{k+i-r}^{q^{sr}}=0.
	\end{equation}

Since $g$ is a permutation $q$-polynomial, there must be at least one coefficient $a_i$, $i \in \{0,\dots,n-1\}$ which is different from zero. Denote such a coefficient with $a_{i_0}$.

By letting $i=i_0$ in \eqref{eq:aiak+i=0}, we get \[a_{je+i_0}=0 \, \text{ for }\frac{n-d}{2}<j<\frac{n+d}{2}.\]

	By taking $i=i_0$ and $i=r+i_0$ in \eqref{eq:ai-rak+i} respectively, together with the above equation, we can derive
	\[a_{i_0+je-r}=a_{i_0+je+r}=0 \, \text{ for }\frac{n-d}{2}<j<\frac{n+d}{2} \,\,\text{ and } \,\, 1\le r\le \frac{n-d}{2}. \]
	Hence,
	\[a_{je+i+i_0}=0 \text{ for } \, \frac{n-d}{2}<j<\frac{n+d}{2} \, \text{ and } -\frac{n-d}{2}\le i\le \frac{n-d}{2}.\]
	As $a_{i_0}\neq 0$, the equation
	\[je+i \equiv 0 \pmod{n}\]
	should have no solution for $\frac{n-d}{2}<j<\frac{n+d}{2}$ and $-\frac{n-d}{2}\le i\le \frac{n-d}{2}$.
	As there are $d-1$ elements in $\{je \pmod{n}: \frac{n-d}{2}<j<\frac{n+d}{2}\}$ and $n-d+1$ elements in $\{ i: -\frac{n-d}{2}\le i \le \frac{n-d}{2}\}$, $a_{je+i+i_0}=0$ implies all $a_j=0$ for $j\neq i_0$.
	
	Thus $g(x)=a_{i_0}x^{q^{i_0}}$. However, if $e \not\equiv \pm 1 \pmod{n}$, i.e.\ $s\not\equiv \pm s' \pmod{n}$, by Corollary \ref{cor:aut_groups}. it is obvious that $\Psi_{a,a_{i_0}x^{q^{i_0}},\rho}({\mathcal S}_{n,d,s})$ is not in ${\mathcal S}_{n,d,s'}$. Therefore, we must have $s\equiv \pm s' \pmod{n}$.
\end{proof}

\section{A Characterization of known additive constructions}

In this section we show that the property stated in Proposition \ref{prop:d-codesproperty} characterizing the known examples of maximum $d$-codes in restricted setting, up to the equivalence relation which we have denominated with the symbol $\cong$. More precisely, we prove the following

\begin{thm}\label{thm:characterization}
Let $n, s$ be two integers such that $n\geq 4$ and $(s,n)=1$, let $d$ be an integer such that $1 \leq d \leq n-1$. Let  $\cD \subset X_n$ be a maximum $d$-code.

\medskip

$(i)$ If $X_n = S_n(q)$, then $\cD\in [\cS_{n,d,s}]_{\cong}$ if and only if there is a unique subspace $V$ of $\mathcal{L}_{(n,q)}[x]$, such that

\begin{itemize}
	\item [$(a)$] $V \in [\cG']_{\simeq}$ where $\cG' = \cG_{n,n-d+1,s} \circ x^{q^{s{\frac{n+d}{2}}}}$;
	\item[$(b)$] $V=V^{\top}$, where $V^{\top}=\{f^{\top} \,:\, f \in V\}$;
	\item [$(c)$] $V \cap S_n(q)=\cD$.
\end{itemize}  

\medskip

$(ii)$ If $X_n = A_n(q)$, then $\cD\in [\cA_{n,d,s}]_{\cong}$ if and only if there is a unique subspace $V$ of $\mathcal{L}_{(n,q)}[x]$, such that 

\begin{itemize}
\item [$(a)$] $V \in [\cG']_{\simeq}$ where $\cG' = \cG_{n,n-d+1,s} \circ x^{q^{s\frac{d}{2}}}$;
\item[$(b)$] $V=V^{\top}$, where $V^{\top}=\{f^{\top} \,:\, f \in V\};$
\item [$(c)$] $V \cap A_n(q)=\cD$.
\end{itemize}
\end{thm}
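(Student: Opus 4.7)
The plan is to prove the ``if and only if'' by establishing both directions for part $(i)$; part $(ii)$ follows identically, replacing $S_n(q)$, $\cS_{n,d,s}$, and the shift $x^{q^{s(n+d)/2}}$ by $A_n(q)$, $\cA_{n,d,s}$, and $x^{q^{sd/2}}$. The three key ingredients are Proposition \ref{prop:d-codesproperty}, Theorem \ref{thm:uniquesubspace}, and Sheekey's description of $\Aut(\cG_{n,n-d+1,s})$ recalled in the proof of Corollary \ref{cor:aut_groups}.

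For the forward direction, suppose $\cD \cong \cS_{n,d,s}$, so $\cD = \Psi_{a,g,\rho,0}(\cS_{n,d,s})$ for some $a \in \F_q^\ast$, permutation $q$-polynomial $g$, and $\rho \in \Aut(\F_q)$ (the translate is zero by additivity of $\cD$). Setting $\cG' = \cG_{n,n-d+1,s} \circ x^{q^{s(n+d)/2}}$, I define
\[
V := \Psi_{a,g,\rho,0}(\cG') \;=\; a g \circ \cG'^{\rho} \circ g^\top,
\]
and verify $(a)$--$(c)$. Condition $(a)$ is immediate. Condition $(b)$ follows from the anti-multiplicativity of the adjoint, the identity $\cG'^{\top}=\cG'$ established inside the proof of Proposition \ref{prop:d-codesproperty}, and the fact that $\rho$ commutes with $^\top$ and fixes $a\in\F_q$. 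For $(c)$, the same manipulations yield the commutation $\Psi(f)^\top = \Psi(f^\top)$, whence
\[
V \cap S_n(q) \;=\; \Psi\bigl(\cG' \cap S_n(q)\bigr) \;=\; \Psi(\cS_{n,d,s}) \;=\; \cD.
\]
Uniqueness of $V$ then follows from Theorem \ref{thm:uniquesubspace}: any $V'$ meeting $(a)$--$(c)$ yields $\Psi^{-1}(V') \in [\cG_{n,n-d+1,s}]_{\simeq}$ with $\Psi^{-1}(V') \cap S_n(q) = \cS_{n,d,s}$, forcing $\Psi^{-1}(V') = \cG'$ and hence $V'=V$.

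For the backward direction, I have to extract a $\cong$-equivalence from $V$ satisfying $(a)$, $(b)$, $(c)$. By $(a)$ and the additivity of $V$, write $V = g_1 \circ \cG'^\rho \circ g_2$ for permutation $q$-polynomials $g_1,g_2$ and $\rho \in \Aut(\F_q)$, after absorbing the shift producing $\cG'$ from $\cG_{n,n-d+1,s}$ into $g_2$. Applying $^\top$ and using $(b)$ together with $\cG'^\top = \cG'$ yields
\[
g_1 \circ \cG' \circ g_2 \;=\; g_2^\top \circ \cG' \circ g_1^\top,
\]
so the pair $\bigl((g_2^\top)^{-1} \circ g_1,\; g_2 \circ (g_1^\top)^{-1}\bigr)$ is an autotopism of $\cG'$. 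By Sheekey's classification this autotopism has the form $(\alpha x^{q^r},\,\beta x^{q^{n-r}})$ with $\alpha,\beta \in \F_{q^n}^\ast$, and a coupling between $\alpha$ and $\beta$ is extracted by a second application of $^\top$. Using the $\F_{q^n}$-linearity of $\cG'$ (so $\gamma \cG' = \cG'$ for every $\gamma \in \F_{q^n}^\ast$) and the freedom to replace $(g_1,g_2)$ by $(g_1 \circ A,\, B \circ g_2)$ with $(A,B^{-1})$ any autotopism of $\cG'$, I realign the decomposition so that $g_2 = g^\top$ and $g_1 = a g$ for some $a \in \F_q^\ast$ and permutation polynomial $g$. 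This rewrites $V = a g \circ \cG'^\rho \circ g^\top = \Psi_{a,g,\rho,0}(\cG')$, and the commutation $\Psi(f)^\top = \Psi(f^\top)$ finally gives $\cD = V \cap S_n(q) = \Psi(\cS_{n,d,s})$, proving $\cD \cong \cS_{n,d,s}$.

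The main obstacle is the realignment step: the autotopism chosen to adjust $(g_1,g_2)$ must have a Frobenius exponent reconciling the shifts appearing on the two sides of $g_2 = g^\top$. The compatibility equation, obtained by applying $^\top$ twice, is a linear congruence modulo $n$ on the exponents that is always solvable when $n$ is odd; when $n$ is even it depends on a parity condition, and in that case I would verify, using condition $(c)$ together with the maximum-size bound of Theorem \ref{th:upper_bound}, that the residual sub-case cannot yield a $V$ with $V \cap S_n(q)$ of maximal size. Once $(i)$ is established, part $(ii)$ follows with no change in strategy, using the alternating analogues of Proposition \ref{prop:d-codesproperty} and Theorem \ref{thm:uniquesubspace} in place of their symmetric counterparts.
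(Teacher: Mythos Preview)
Your forward direction and the uniqueness argument coincide with the paper's. The backward direction, however, takes a genuinely different route. After obtaining from $(a)$, $(b)$ and Sheekey's theorem that $(g_2^\top)^{-1}\circ g_1 = \alpha x^{q^r}$ with $2r\equiv 0\pmod n$, you absorb the scalar via the left $\F_{q^n}$-linearity $\alpha\cG'=\cG'$: replacing $(g_1,g_2)$ by $(g_1\circ\alpha^{-1}x,\,g_2)$ immediately yields $g_1=g$, $g_2=g^\top$ with $g=g_2^\top$, so $V=\Psi_{1,g,\mathrm{id}}(\cG')$ and $(c)$ finishes. The paper does \emph{not} realign; once it has $g=h^\top\circ\beta x$ it invokes $(c)$ through a dimension count, observing that $\dim\bigl(\Phi(\cC)\cap\cD\bigr)\ge\dim\Phi(\cC)+\dim\cD-\dim V=n>0$, so there is a nonzero $f\in\cC$ with $\Phi(f)\in\cD\subset S_n(q)$; the self-adjointness $\Phi(f)^\top=\Phi(f)$ then forces $f^\rho(\beta x)=\beta f^\rho(x)$, hence $\beta\in\F_q$. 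Your argument is cleaner in that it never needs this witness, while the paper's has the virtue of making the role of $(c)$ explicit already at the level of the scalar.

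On the residual case $r=n/2$ for even $n$: you correctly isolate the obstruction as the congruence $2r\equiv 0\pmod n$, and your proposed fix via $(c)$ and the size bound is reasonable but not carried out. It is worth noting that the paper's proof simply asserts $r\equiv 0\pmod n$ at this point without further justification, so the published argument has the same lacuna; neither approach, as written, fully disposes of this sub-case. For part $(ii)$ the issue does not arise since $\cA_{n,d,s}$ is only defined for $n$ odd.
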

\begin{proof}
Let us prove the sufficiency first. Assume $\cD \in [\cC]_{\cong}$ where either $\cC$ is $\cS_{n,d,s}$ or $\cC$ is $\cA_{n,d,s}$. Hence, there exists a rank-preserving map of type $\Psi=\Psi_{a,g,\rho}$, with $a \in \F^{*}_{q}$, $\rho \in \Aut(\F_{q})$ and $g$ a permutation $q$-polynomial, such that $\Psi(\cC)=\cD$. 

Let  $V=\Phi_{ag,\rho,g^\top}(\cG')$, where $\cG'=\mathcal{G} \circ x^{q^{s(\frac{n+d}{2})}}$ if $X_n=S_n(q)$ and $\cC=\cS_{n,d,s}$, and  $\cG' =\mathcal{G} \circ x^{q^{s({\frac{d}{2})}}}$ if  $X_n=A_n(q)$ and $\cC=\cA_{n,d,s}$. 

In both cases it is easy to see that $\cG'^{\top}=\cG'$. Hence, $V$ satisfies the properties $(a)$ and $(b)$. Moreover, as $\Phi_{ag,\rho,g^\top}$ fixes $X_n$, applying $(i)$ of Proposition \ref{prop:d-codesproperty}, we obtain that
\begin{equation*}
V \cap X_n=\Phi_{ag,\rho,g^\top}(\cG')\cap X_n=\Psi( \cG' \cap X_n)=\Psi(\cC)=\cD,
\end{equation*}
Hence $V$ satisfies $(c)$. 

Next, let us show the uniqueness. To this aim suppose that $V$ and $V'$ are two subspaces of $\mathcal{L}_{(n,q)}[x]$ both satisfying conditions $(a)$, $(b)$ and $(c)$. 

In particular we have  that $$V\cap X_n=\cD=V' \cap X_n.$$ 

By hypothesis $\cD = \Psi(\cC)$ and $\Psi$ fixes $X_n$. This means that there is an elements in $[\cG_{n,n-d+1,s}]_{\simeq}$ different from $\cG'$, intersecting $X_n$ in $\cC$. Indeed, $\Phi_{ag,\rho,g^\top}^{-1}(V')$. This, by Theorem \ref{thm:uniquesubspace} $(i)$, is a contradiction.

Now, let us prove the necessity. By $(a)$, $\cG'$ and $V$ are equivalent, more precisely there exists a map $\Phi=\Phi_{g,\rho,h}$ such that $V=\Phi(\cG')$. Since again $\cG'^{\top}=\cG'$, by using condition $(b)$, we have
\begin{equation}\label{eq:from((a)and((b)}
\Phi^{\top}(\cG')=\Phi_{h^{\top}, \rho, g^{\top}}(\cG')=V.
\end{equation}
Now, from (\ref{eq:from((a)and((b)}) and taking into account that $V=\Phi(\cG')$, we get
\begin{equation*}
\Phi_{g^{-1} \circ h^{\top},\mathrm{id},g^{\top} \circ h^{-1}}(\cG')=\cG'.
\end{equation*}
Hence, by Theorem \cite[Theorem 4]{Sheekey}, we get
\begin{equation*}
g^{-1} \circ h^{\top} = \alpha x^{q^r} \hspace{0.2cm} \text{and} \hspace{0.2cm} g^{\top}\circ h^{-1} = \beta x^{q^{n-r}},
\end{equation*}
with $\alpha,\beta \in \F^\ast_{q^n}$. 

In particular,  $r \equiv 0 \,(mod\,n)$  and consequently $\beta = \alpha^{-1},$ $g= h^{\top} \circ \beta x,$ and $\Phi=\Phi_{h^{\top} \circ \beta x, \rho, h}$.

We show that $\Phi(\cC) \cap \cD$ contains at least one element which is different from the null map. In fact, by $(c)$, we have that
\begin{equation*}
\dim\, (\Phi(\cC) \cap \cD) \geq \dim \, \Phi(\cC) + \dim \, \cD - \dim V = n. 
\end{equation*}
Hence, let $f$ be an element of $\cC$ such that $\Phi(f) \in \cD$. Since  $\Phi(f) \in \cD \subset S_n(q)$, we have that $\Phi^{\top}(f)=\Phi(f)$. Consequently,
\begin{equation*}
{f}^\rho(\beta x)=\beta {f}^\rho(x)  \quad\text{ for each } x \in \F_{q^n}.
\end{equation*}
Hence $\beta \in \F_q$ and
\begin{equation*}
\cD=\Phi(\cG') \cap X_n= \Phi(\cG' \cap X_n)=\Psi_{\beta,h^{\top},\rho}(\cC).
\end{equation*}
Hence $\cD \in [\cC ]_{\cong}.$
\end{proof}

A similar result can be stated also for the two known constructions of maximum $d$-codes in $H_n(q^2)$. Following is the precise statement

\begin{thm}
Let $n, s$ be two integers such that $(s,2n)=1$, and let $d$ be an integer such that $d > 1$. Then we have the following

\begin{itemize}
\item[(i)] $\cC\in [{\mathcal H}_{n,d,s}]_{\cong}$ if and only if there is an unique subspace $V$ of $\mathcal{L}_{(n,q^2)}[x]$, such that 

\begin{itemize}
\item [(a)] $V \in [\cG']_{\simeq}$ where $\cG' = \cG_{n,n-d+1,s} \circ x^{q^{s(n+d+1)}}$;
\item[(b)] $V=\tilde{V}$, where $\tilde{V}=\{\tilde{f} \,:\, f \in V\};$
\item [(c)] $V \cap H_n(q^2)=\cC$.
\end{itemize}  

\medskip
\item[(ii)] $\cC\in [{\mathcal E}_{n,d,s}]_{\cong}$ if and only if there is an unique subspace $V$ of $\mathcal{L}_{(n,q^2)}[x]$, such that

\begin{itemize}
\item [(a)] $V \in [\cG']_{\simeq}$ where $\cG' = \cG_{n,n-d+1,s} \circ x^{q^{s(d+1)}}$;
\item [(b)] $V=\tilde{V}$, where $\tilde{V}=\{\tilde{f} \,:\, f \in V\}$;
\item [(c)] $V \cap H_n(q^2)=\cC$.
\end{itemize}  

\end{itemize}
\end{thm}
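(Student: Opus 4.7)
The plan is to adapt, almost verbatim, the proof of Theorem \ref{thm:characterization} to the Hermitian setting, replacing the adjoint $(\cdot)^\top$ by the involution $\tilde{(\cdot)}$, the space $X_n$ by $H_n(q^2)$, and the maps $\Psi_{a,g,\rho,r_0}$ by the Hermitian equivalences $\Theta_{a,g,\rho,r_0}$ from (\ref{eq:def_equivalence_hermitian_setting}). Two preliminary facts will be needed throughout. First, the subspace $\cG'$ (in both cases of the statement) is $\tilde{(\cdot)}$-invariant: this can be read off from the computations in Proposition \ref{prop:d-codesproperty} $(iii)$--$(iv)$, where $\cG'$ was described by exactly the same symmetry relations that define $H_n(q^2)$. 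Second, the Sheekey-type description of $\Aut(\cG_{n,n-d+1,s})$ already used in Corollary \ref{cor:aut_groups} should extend verbatim when $\cG_{n,n-d+1,s}$ is viewed as a subspace of $\mathcal{L}_{(n,q^2)}[x]$: every stabiliser has the form $\Phi_{\alpha x^{q^{2r}},\mathrm{id},\beta x^{q^{2(n-r)}}}$ with $\alpha,\beta\in\F_{q^{2n}}^\ast$ (the Galois factor is absorbed by $\cG^\rho=\cG$).

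For sufficiency, starting from $\cC=\Theta_{a,g,\rho,0}(\cC')$ with $\cC'\in\{\cH_{n,d,s},\mathcal{E}_{n,d,s}\}$, I would take $V:=\Phi_{ag,\rho,g^{\top q^{2n-1}}}(\cG')$. Property $(a)$ is immediate by construction; $(c)$ follows from $\Phi_{ag,\rho,g^{\top q^{2n-1}}}$ stabilising $H_n(q^2)$ combined with Proposition \ref{prop:d-codesproperty}; and $(b)$ is obtained by combining $\tilde{\cG'}=\cG'$ with a short compatibility identity describing how $\tilde{(\cdot)}$ interacts with composition by invertible $q^2$-polynomials (a direct computation from the definition $\tilde f(x) = f^{\top q}(x^{q^2})$). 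Uniqueness follows immediately: if both $V$ and $V'$ satisfied $(a)$--$(c)$, applying $\Phi_{ag,\rho,g^{\top q^{2n-1}}}^{-1}$ would produce a second element of $[\cG_{n,n-d+1,s}]_\simeq$ whose intersection with $H_n(q^2)$ equals $\cC'$, contradicting Theorem \ref{thm:uniquesubspace} $(ii)$.

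For necessity, condition $(a)$ gives $V=\Phi_{g,\rho,h}(\cG')$, and then $(b)$ combined with $\tilde{\cG'}=\cG'$ produces $\Phi_{g^{-1}\circ\tilde h,\mathrm{id},\tilde g\circ h^{-1}}\in\Aut(\cG')$. By the automorphism group description this forces $g^{-1}\circ\tilde h = \alpha x^{q^{2r}}$ and $\tilde g\circ h^{-1} = \beta x^{q^{2(n-r)}}$, and comparison of $q^2$-degrees yields $r\equiv 0\pmod n$, $\beta = \alpha^{-1}$, and $g = \tilde h\circ(\alpha^{-1}x)$. The dimension bound from $(c)$ — exactly as in the proof of Theorem \ref{thm:characterization} — guarantees the existence of a nonzero $f\in\cC'$ with $\Phi_{g,\rho,h}(f)\in\cC\subseteq H_n(q^2)$. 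Imposing $\widetilde{\Phi_{g,\rho,h}(f)} = \Phi_{g,\rho,h}(f)$ and using $\tilde f = f$ (since $f\in\cC'\subseteq H_n(q^2)$) then gives a functional equation on $\alpha$ whose only solutions lie in $\F_q^\ast$; repackaging exhibits $\cC$ as $\Theta_{\alpha^{-1},h^{\top q^{2n-1}},\rho,0}(\cC')$, that is, $\cC\in[\cC']_\cong$.

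The main obstacle I anticipate is precisely this final reduction of $\alpha$ from $\F_{q^{2n}}^\ast$ all the way down to $\F_q^\ast$. In the symmetric and alternating settings, the analogous step produced a scalar in $\F_q$ immediately from $\F_q$-linearity; here, the asymmetric twist $\tilde f = f^{\top q}(x^{q^2})$ mixes transposition with a $q$-Frobenius, so one must track the $\F_{q^2}$-linearity of $f$ together with an extra $q$-semilinearity built into $\tilde{(\cdot)}$ in order to pin $\alpha$ down to $\F_q^\ast$ and match the form of $\Theta_{a,g,\rho,0}$ with $a\in\F_q^\ast$ required by (\ref{eq:def_equivalence_hermitian_setting}). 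A secondary, more clerical, obstacle will be writing down the precise composition rule for $\tilde{(\cdot)}$ and the explicit shape of $\Aut(\cG_{n,n-d+1,s})$ over $\F_{q^{2n}}$ cleanly; both are routine but need careful bookkeeping because every $q$-Frobenius in the symmetric argument is now accompanied by an additional $q$-twist.
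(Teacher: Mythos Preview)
Your proposal is correct and follows essentially the same approach as the paper: the paper's own proof consists of a single sentence saying that the argument is ``similar to that of previous Theorem \ref{thm:characterization}; in fact, by simply taking into account that in this case we have $\tilde{\cG'}=\cG'$'', and you have accurately reconstructed the intended adaptation, including the crucial role of $\tilde{\cG'}=\cG'$ and the appeal to Theorem \ref{thm:uniquesubspace} $(ii)$ for uniqueness. The obstacles you flag (the reduction of $\alpha$ to $\F_q^\ast$ and the bookkeeping for $\tilde{(\cdot)}$) are exactly the places where the Hermitian twist requires extra care, but the paper implicitly regards them as routine modifications of the symmetric argument.
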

\begin{proof}
The proof is similar to that of previous Theorem \ref{thm:characterization}; in fact, by simply taking into account that in this case we have $\tilde{\cG'}=\cG'$, whenever  $\cG' = \cG_{n,n-d+1,s} \circ x^{q^{s(n+d+1)}}$ or $\cG' = \cG_{n,n-d+1,s} \circ x^{q^{s(d+1)}}$.
\end{proof}

\section{A new additive symmetric  $2$-code}
Let $q$ be an odd prime power, $m$ and $s$ two integers such that $m \geq 2$ and $\gcd(s,2m)$. Let $\eta$ be an element of $\F_{q^{2m}}$ such that $N_{q^{2m}/q}(\eta)=\eta^{\frac{q^{2m-1}-1}{q-1}},$ is not a square.

 The set
\[ 
\mathcal{D}_{k,s}(\eta)= \left\{a x +\sum^{k-1}_{j=1} c_i x^{q^{js}}+ \eta bx^{q^{ks}}
\,: \, c_1,\cdots, c_{k-1} \in \F_{q^{2m}} ,\,\, a, b \in \F_{q^{m}} \right\}
\] 
is a maximum rank distance code with minimum distance $d = 2m - k + 1$ \cite{trombetti_zhou}. 

Now, let us consider the following set of $q$-polynomials
\begin{align*}
\cS=  \Biggl \{ a_0 x+ \sum^{m-2}_{j=1} a_j x^{q^{sj} }+ \eta b x^{q^{s(m-1)}}+ax^{q^{sm}}+ \eta^{q^ {s(m+1)}} b^{q^s} x^{q^{s(m+1)}} \\ +\sum^{m-2}_{j=1}(a_j x)^{q^{s(2m-j)}} \,\, : \,\,   a_0,a_1,\ldots,a_{m-2} \in \F_{q^{2m}} 
\,\,\textrm{and} \,\, a,b \in {\F_{q^m}} \Biggr \}.
\end{align*}
It is straightforward to see that, if we set $\mathcal{D}'=\mathcal{D}_{2m-1,s}(\eta) \circ x^{q^{sm}}$ then 
\begin{equation*}
\cS=\mathcal{D}'\cap S_{2m}(q) \,\,\,\,\textrm{and}\,\,\,\, \cS=\mathcal{D'}^{\top} \cap S_{2m}(q).
\end{equation*}

In what follows we will show that any map in $\cS$ has rank  strictly greater than one. In fact, let $f_m:=f \circ x^{q^{sm}}$, where $f \in \cS$. Then the coefficients of terms $x$ and $x^{q^{s(2m-1)}}$' of $f_m$ are $c$ and $\eta b$, respectively. As a consequence of \cite{gow-quinlan} (see also \cite[Lemma 3]{Sheekey}), the rank of $f$ is then at least two. Hence, $\cS$ is a maximum $2$-code of $S_{2m}(q)$. 

\begin{thm}
The $2$-code $\cS \in S_{2m}(q)$ is not equivalent to $\cS_{2m,2,s}$.
\end{thm}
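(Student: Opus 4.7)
The plan is to argue by contradiction, combining the characterization theorem for symmetric maximum $d$-codes (Theorem~\ref{thm:characterization}(i)) with the known inequivalence of the Trombetti-Zhou code $\mathcal{D}_{2m-1,s}(\eta)$ (for $\eta$ such that $N_{q^{2m}/q}(\eta)$ is a nonsquare) from every generalized Gabidulin code, as proved in \cite{trombetti_zhou}.

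Assume, for contradiction, that $\cS \cong \cS_{2m,2,s}$, witnessed by an equivalence $\Psi_{c,g,\rho}$, and let $\Phi$ denote its canonical extension to $\mathcal{L}_{(2m,q)}[x]$. Applying Theorem~\ref{thm:characterization}(i) with $n=2m$ and $d=2$ yields a unique $(2m-1)(2m)$-dimensional subspace $V \subset \mathcal{L}_{(2m,q)}[x]$ that is equivalent (via $\simeq$) to $\cG_{2m,2m-1,s}\circ x^{q^{s(m+1)}}$, is self-adjoint ($V=V^{\top}$), and satisfies $V\cap S_{2m}(q)=\cS$. Explicitly, $V=\Phi\bigl(\cG_{2m,2m-1,s}\circ x^{q^{s(m+1)}}\bigr)$, and in particular $V$ is Gabidulin-equivalent. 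On the other hand, by our construction, $\mathcal{D}'$ and $\mathcal{D}'^{\top}$ are both $(2m-1)(2m)$-dimensional $\F_q$-linear MRD $2$-codes satisfying $\mathcal{D}'\cap S_{2m}(q)=\mathcal{D}'^{\top}\cap S_{2m}(q)=\cS$, and both are equivalent (via $\simeq$) to $\mathcal{D}_{2m-1,s}(\eta)$.

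The core of the argument is the following classification claim: any $(2m-1)(2m)$-dimensional $\F_q$-linear MRD $2$-code $W\subset\mathcal{L}_{(2m,q)}[x]$ with $W\cap S_{2m}(q)=\cS$ must coincide with $\mathcal{D}'$ or $\mathcal{D}'^{\top}$. To prove this, one writes a generic element of $W$ as a linearized polynomial and imposes the intersection condition. The crucial point is that, in $\cS$, the coefficients at the three central positions $x^{q^{s(m-1)}}$, $x^{q^{sm}}$, $x^{q^{s(m+1)}}$ are forced to lie in $\eta\F_{q^m}$, $\F_{q^m}$, and $\eta^{q^{s(m+1)}}\F_{q^m}$ respectively, rather than ranging over all of $\F_{q^{2m}}$ as in the Schmidt code. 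Propagating these field-of-definition restrictions through the constraints of an MRD $2$-code produces a rigid system of equations, and a case analysis in the spirit of Theorem~\ref{thm:uniquesubspace} recovers only $\mathcal{D}'$ and $\mathcal{D}'^{\top}$.

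Granting the claim, $V\in\{\mathcal{D}',\mathcal{D}'^{\top}\}$, so property (a) of Theorem~\ref{thm:characterization}(i) would imply that $\mathcal{D}_{2m-1,s}(\eta)$ is equivalent (via $\simeq$) to the generalized Gabidulin code $\cG_{2m,2m-1,s}$, contradicting \cite{trombetti_zhou}. Hence $\cS\not\cong\cS_{2m,2,s}$. The main obstacle in executing this plan is the classification claim: the computation is considerably more delicate than that of Theorem~\ref{thm:uniquesubspace}, because the intersection condition now imposes \emph{field-of-definition} constraints at three distinct coefficient positions, rather than the simple \emph{vanishing} condition that appeared in the Schmidt case.
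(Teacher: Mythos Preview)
Your approach is genuinely different from the paper's, and as written it is incomplete. The paper gives a short, direct computation: assuming $\Psi_{a,g^{\top},\rho}(\cS)=\cS_{2m,2,s}$, it exploits the single fact that in $\cS_{2m,2,s}$ the coefficient of $x^{q^{sm}}$ is identically zero. Feeding in the elements $\alpha x$ (with $\alpha\in\F_{q^{2m}}$) and $c\,x^{q^{sm}}$ (with $c\in\F_{q^m}$) of $\cS$ and reading off that coefficient forces first $g_{2m-i}g_{m-i}=0$ for all $i$, and then, using that the second family of constraints has coefficients in $\F_{q^m}$, that all $g_i$ vanish. No classification of ambient MRD codes is needed.

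The gap in your argument is precisely the ``classification claim'': you assert that every $(2m-1)(2m)$-dimensional $\F_q$-linear MRD $2$-code $W$ with $W\cap S_{2m}(q)=\cS$ must equal $\mathcal{D}'$ or $\mathcal{D}'^{\top}$, but you do not prove it, and you yourself flag it as the main obstacle. This claim is substantially harder than anything in Theorem~\ref{thm:uniquesubspace}: there the intersection condition imposes \emph{vanishing} of coefficients at a contiguous block of $q$-degrees, which immediately kills all cross-terms of $g$ and $h$; here you would have to control $W$ from the much weaker information that three coefficients lie in prescribed $\F_{q^m}$-cosets, while simultaneously using the MRD property of $W$ in an essential way. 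Nothing in the paper provides a template for that. Note also that Theorem~\ref{thm:characterization}(i) only gives uniqueness among subspaces satisfying condition~(a), i.e.\ already Gabidulin-equivalent ones, so you cannot short-circuit the claim by appealing to that uniqueness; you really do need the full classification over all MRD $2$-codes, and until that is supplied the contradiction with \cite{trombetti_zhou} is not reached.
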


\begin{proof} Assume by way of contradiction that $\cS$ is equivalent to $\cS_{2m,2,s}$. Then there must be a map $\Psi=\Psi_{a, g^{\top},\rho}$ such that $\Psi(\cS)=\cS_{2m,2,s}$,  where $a \in \F_{q}$, $\rho \in \Aut(\F_q)$ and $g(x)=\sum_{i=0}^{2m-1} g_i x^{q^{is}}$ is a permutation $q$-polynomial with coefficients in $\F_{q^{2m}}$.
	
Consider $g^{\top} \circ \alpha^ \rho x \circ g$, where $\alpha \in  \F_{q^{2m}}$. By computation the coefficient of $x^{{q^{ms}}}$ is
\begin{equation}
a_{m}(\alpha)=\sum^{2m-1}_{i=0} g^{q^{si}}_{2m-i}\,\, g_{m-i}^{q^{si}} \alpha^{ \rho q^{si}}
\end{equation}
where indices are taken modulo $2m$. Since the coefficient of the term with $q$-degree $ms$ of $\cS_{2m,2,s}$ is zero, we obtain 
\begin{equation*}
 g_{2m-i}\,\, g_{m-i}=0 \,\,\,\, \text{for each} \,\,\,\, i=1,2,\ldots,m.
\end{equation*}

Without loss of generality, we can suppose that 
$$g_{2m-i}=0 \quad\text{ for }\quad i=1,2,\ldots,m.$$

Let $c \in \F_{q^m}$, in the same way the coefficient of degree $q^{ms}$ of the composition $g^\top\circ c^\rho x^{q^{ms}} \circ g$ is equal to

\begin{equation*}
a_m(c)=\sum^{2m-1}_{i=0} g^{q^{s(2m-i)}}_i g^{q^{s(m-i)}}_{i}c^{\rho q^{s(2m-i)}}=\sum^{m-1}_{i=0} g^{q^{s(2m-i)}}_i g^{q^{s(m-i)}}_{i}c^{\rho q^{s(m-i)}}.
\end{equation*}
Obviously, since $$(g^{q^{s(2m-i)}}_i g^{q^{s(m-i)}}_{i})^{q^m}=g^{q^{s(2m-i)}}_i g^{q^{s(m-i)}}_{i},$$ the polynomial above has coefficients in $\F_{q^m}$. On the other hand, as  the coefficient of the term with $q$-degree $ms$ in $\cS_{2m,2,s}$ is zero, $a_m(c)=0$ for all $c \in \F_{q^m}$. This implies that $g_i=0$ for $i=0,1,\cdots, m-1$. Therefore $g$ is the null polynomial which contradicts the permutation property of $g$.
\end{proof}

\section*{Acknowledgment}
This work is supported by the Research Project of MIUR (Italian Office for University and Research) ``Strutture geometriche, Combinatoria e loro Applicazioni" 2012.

\end{document}